\documentclass[12pt]{amsart}
\usepackage{graphicx} 
\usepackage{amsmath,amsthm}
\usepackage{graphicx}
\usepackage[all,cmtip]{xy}
\usepackage{comment}
\usepackage{amsmath}
\usepackage[dvipsnames]{xcolor}

\usepackage{amssymb}
\usepackage{graphicx} 
\usepackage{amsmath}
\usepackage{amssymb}
\usepackage{epsfig}
\usepackage{graphicx}
\usepackage{color}
\usepackage{fullpage}
\definecolor{shadecolor}{gray}{0.875}
\usepackage{amscd}
\usepackage{mathtools}
\usepackage{hyperref}
\usepackage{comment}

\def\PP{{\mathbb P}}

\newcommand{\cO}{\mathcal{O}}

\newtheorem{lemma}{Lemma}[section]
\newtheorem{theorem}[lemma]{Theorem}

\newtheorem{corollary}[lemma]{Corollary}
\newtheorem{proposition}[lemma]{Proposition}

\newtheorem{question}[lemma]{Question}

\author[R. Beheshti]{Roya Beheshti}
\address{Department of Mathematics\\Washington University in St. Louis, MO 63105}
\email{beheshti@wustl.edu}

\author{SHIBASHIS MUKHOPADHYAY}
\address{Department of Mathematics\\Washington University in St. Louis, MO 63105}
\email{m.shibashis@wustl.edu}

\begin{document}
\title{Higher-order tangent lines to smooth hypersurfaces}
\begin{abstract}
    We prove that for every smooth hypersurface $X\subset\PP^n$ of degree $d\geq n\geq5$ over an algebraically closed field of characteristic zero, the space of pointed lines $(\ell,p)$ having intersection multiplicity at least six with $X$ at $p$ has the expected dimension $2n-7$. The proof uses deformations of ruled surfaces preserving prescribed contact with $X$. As an application, we show that a smooth hypersurface of degree $d\geq6$ in $\PP^5$ contains at most $225d^3-1370d^2+1800d$ planes. This bound has optimal order of growth in $d$.
\end{abstract}

\maketitle

\section{Introduction}
Let $X\subset \PP^n$ be a smooth hypersurface of degree $d$ over an
algebraically closed field of characteristic $0$. For $1 \leq k \leq d+1$, let $V_k(X)$ denote the
subscheme of the space of pointed lines in $\PP^n$  parametrizing pairs $(\ell,p)$
such that $\ell$ has intersection multiplicity at least $k$ with $X$
at $p$. The expected dimension of $V_k(X)$  is $2n-k-1$, and every irreducible
component has dimension at least this large. We ask
whether smoothness forces equality in the following range.

\begin{question}
\label{question-main}
Fix $k\geq 1$. If $X$ is smooth and $d\geq n\geq k-1$, does $V_k(X)$
have the expected dimension?
\end{question}

When $k=d+1$, the contact condition forces $\ell \subset  X$, so $V_{d+1}(X)$ is the universal family over the Fano variety of lines on $X$. The Debarre--de Jong conjecture predicts that for every
smooth hypersurface $X\subset\mathbb P^n$ of degree
$d\leq n$ the Fano variety of lines $F(X)$ has the expected
dimension $2n-d-3$. Therefore, for each fixed degree $d$, an affirmative answer to the above question when $n=d$ and $k=d+1$ would imply the Debarre--de Jong conjecture for smooth hypersurfaces of degree $d$ in $\PP^d$, and hence for all smooth Fano hypersurfaces of degree $d$ \cite{beheshti-crelle}. Lines having prescribed high-order contact and the associated
contact cones were studied by Jennings \cite{Jennings1986}.
For general hypersurfaces, Bastianelli, Ciliberto, Flamini,
and Supino \cite{BCFS2023} obtained dimension results for
these cones and applications to measures of irrationality.

The answer is affirmative when $k\leq 4$. Patel, Riedl, Smith, and Tseng \cite{PRST} proved 
that $V_5(X)$ has the expected dimension whenever $d\geq n\geq 4$.
In this paper, we prove the next case.

\begin{theorem}\label{main}
Let $X\subset\PP^n$ be a smooth hypersurface of degree $d$, with
$d\geq n\geq 5$. Then $V_6(X)$ has the expected dimension.
\end{theorem}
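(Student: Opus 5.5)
We argue by contradiction. Suppose some irreducible component $Z\subset V_6(X)$ has dimension at least $2n-6$, one more than the expected $2n-7$. By the result of Patel--Riedl--Smith--Tseng \cite{PRST}, $V_5(X)$ has pure dimension $2n-6$ when $d\geq n\geq 4$. Every component of $V_5(X)$ containing $Z$ has dimension $2n-6$, so $Z$ is itself a component of $V_5(X)$. Thus a general pointed line of the family $Z$ has contact order exactly $6$ at $p$, and the contact order is ``non-generic'': the sixth-order condition imposes no extra condition along a $(2n-6)$-dimensional family. We derive a contradiction from this by deformation theory.

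The plan is the approach suggested in the abstract. Take a general curve $B$ in $Z$, giving a one-parameter family of pointed lines $(\ell_t,p_t)$. This sweeps out a ruled surface $S\subset\PP^n$ with a section $\sigma$ given by $t\mapsto p_t$. Along $\sigma$, each ruling meets $X$ with multiplicity at least $6$. One then writes down the sheaf of first-order deformations of the pair $(S,\sigma)$ (as a map from $\PP(E)\to\PP^n$) that preserve contact order $\geq k$ along the section. This is a subsheaf $N_k$ of the pulled-back normal bundle $N_{\PP(E)/\PP^n}$ restricted to rulings, cut out by the vanishing of the derivatives of $F$ up to order $k-1$ in the ruling direction.

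The key computation is on a single line $\ell$ with point $p$. Choose coordinates with $p=[1:0:\dots:0]$ and $\ell=\{x_2=\dots=x_n=0\}$, and write $F|_\ell=x_1^6 g$. The tangent space to $V_6$ at $(\ell,p)$ is the kernel of an explicit linear map
\[
H^0(N_{\ell})\oplus T_p\ell\to \cO_\ell(d)/\mathfrak m_p^{6}.
\]
This map involves the partial derivatives $\partial F/\partial x_i|_\ell$ truncated to order $5$ at $p$. If $Z$ has excess dimension, the rank of this map drops along all of $Z$. The aim is to show that this rank drop, holding on a $(2n-6)$-dimensional family, forces the truncated jets $j^5_p(\partial_iF|_\ell)$, $i\geq 2$, to span a subspace of dimension $\leq 4$ in a specific filtered form. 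Equivalently, the restriction of the Gauss map of $X$ to $\ell$ must be degenerate to high order at $p$.

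Next we use that $B$ moves. Differentiating the degeneracy condition along $S$ produces a nonzero section of a twist of $N_{S/X}$-type bundle restricted to $\sigma$ with prescribed vanishing. Degree bounds on the ruled surface then give a contradiction once $d\geq n$. The inequality $d\geq n$ enters exactly here: the positivity of $\cO_\ell(d-n+\cdots)$ in the rulings makes the relevant $H^1$ vanish, so contact order $6$ deforms to contact order $6$ in the expected number of directions unless $X$ is singular.

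Smoothness of $X$ is used to exclude the remaining degenerate cases. We argue by cases on how many of the jets $j^5_p(\partial_iF|_\ell)$ vanish identically. In the extreme case, all $\partial_iF$ vanish at $p$ along a positive-dimensional set, which forces singular points of $X$.

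I expect the main obstacle to be this case analysis of the rank drop at order $6$. At order $5$, \cite{PRST} can handle the few possible Jordan-type degeneracies of the truncated jet matrix directly. At order $6$, new configurations appear. For example, the ruled surface may be a cone, when $p_t$ is constant, or the lines may all lie in a fixed $\PP^3$. Each of these must be treated separately. In the cone case I would first reduce to the statement for a hyperplane section through $p$ and use induction on $n$, starting from $n=5$. In the planar-family case I would bound the dimension directly using the dimension of the Fano-type variety of planes meeting $X$ to high order.
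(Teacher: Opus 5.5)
There is a genuine gap at the step that is supposed to produce the contradiction. The sentence ``differentiating the degeneracy condition along $S$ produces a nonzero section of a twist of an $N_{S/X}$-type bundle \dots\ degree bounds on the ruled surface then give a contradiction once $d\geq n$'' does not say which bundle, why excess dimension forces a section of it, or why its degree is negative.

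The tangent-space analysis you describe is exactly the mechanism of \cite[Theorem 2.8]{PRST} (Theorem~\ref{covering-contact} here). If the lines cover $\PP^n$, deformations generate the contact-preserving subbundle $M\subset T_{\PP^n}|_\ell$ (rank $n$, degree $n+1-k$) at a general point. Hence $M$ has no $H^1$ and the tangent space has the expected dimension. You never invoke this result, and once you do, the whole problem is the case where the lines sweep out a proper subvariety $Z\subsetneq\PP^n$. There the excess tangent space just means $M$ has summands of degree $\leq -2$. That is not contradictory by itself, and neither a jet analysis at a single $(\ell,p)$ nor positivity of $\cO_\ell(d-n+\cdots)$ rules it out.

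The paper needs substantial global geometry at this point:
\begin{itemize}
\item reduction to $n=5$ by \emph{general} hyperplane sections (Lemma~\ref{hyperplane-reduction});
\item Segre's classification, with Proposition~\ref{Ysingularities} and Lemma~\ref{prop:rigidity-triple-contact}, to exclude $\dim Z=3$ and $Z$ linear, quadric, or a scroll;
\item for the remaining hypersurface $Z$, cones with vertex at a general point $q\in Z\setminus X$, so the contact locus is a multisection disjoint from the contracted section; their $4$-parameter family of deformations gives a nonzero section of $\bigwedge^4M'\otimes\omega_S(D)$.
\end{itemize}
The contradicting vanishing uses cone-specific identities, $\cO_S(D)\simeq\mathcal L^{\otimes m}$ and $\mathcal L\simeq\pi^*\mathcal N^{-1}\otimes\cO_S(C)$. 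Crucially, it also uses $\deg B_0\leq 6$ from Landsberg's theorem (Proposition~\ref{degree-lines-through-point}), which with Castelnuovo's bound gives $\deg B_0\geq 2g_B-2$. Without such a bound the groups $H^1(B,\mathcal N^{\otimes(i-t)})$ need not vanish. Even with it, the boundary case $d=7$, $m=1$, $g_B=4$ needs a separate argument.

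Your uniform argument would also have to contain results known to be deep.
\begin{itemize}
\item For $d=n=5$, every line with contact order $\geq 6$ lies in $X$. So your claim that a general member has contact order exactly $6$ is false there. In that case the statement is equivalent to $\dim F(X)=2$, i.e.\ the Debarre--de Jong conjecture for quintic fivefolds, which the paper takes from \cite{beheshti-crelle}.
\item The paper deduces $d=6$ from Debarre--de Jong for $d=n=6$ via the cyclic cover $x_6^6=f$ (Lemma~\ref{d=6}).
\item More generally, you never exclude components consisting of lines contained in $X$. The paper first shows $\dim F(X)\leq 2$, using Segre's classification and the finiteness of planes on $X$.
\item Your treatment of the cone case, by induction on $n$ through a hyperplane section containing $p$, is circular. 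Such a hyperplane is not general, so $X\cap H$ may be singular, and the base case $n=5$ is exactly the case under proof.
\end{itemize}
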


Combining the theorem with an intersection-theoretic
calculation from \cite{PRST} gives an explicit bound on the number of
$2$-planes contained in a smooth hypersurface in $\PP^5$:

\begin{corollary}
If $X\subset\PP^5$ is a smooth hypersurface of degree $d\geq 6$,
then the number of $2$-planes contained in $X$ is at most
\[
225d^3-1370d^2+1800d.
\]
\end{corollary}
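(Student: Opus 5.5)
The corollary follows from Theorem \ref{main} by feeding its dimension statement into the intersection-theoretic computation of \cite{PRST}. For $n=5$ the expected dimension of $V_6(X)$ is $2\cdot 5-7=3$. The class of $V_6(X)$ can be computed on the space of pointed lines $\mathbb{F}=\{(\ell,p): p\in\ell\}\subset G(1,5)\times\PP^5$. This space is a $\PP^1$-bundle over $G(1,5)$ and also a $\PP^4$-bundle over $\PP^5$.

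The plan has the following steps.

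\begin{enumerate}
\item Write $V_6(X)$ as the zero scheme of a section of a vector bundle $E$ of rank $6$ on $\mathbb{F}$. Here $E$ is the bundle of $5$-jets at $p$ of degree-$d$ forms restricted to $\ell$; it has a filtration with graded pieces $\cO_{\PP^5}(d)\otimes(\text{Sym}^j T^*)$-type line bundles. The section is induced by the equation $F$ of $X$.

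\item Since $\dim\mathbb{F}=9$ and $\operatorname{rk}E=6$, Theorem \ref{main} says that $V_6(X)$ has pure dimension $3$. The zero scheme is therefore a local complete intersection of the expected codimension.

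\item Conclude that $[V_6(X)]=c_6(E)\cap[\mathbb{F}]$, with all components counted with positive multiplicity. This is the point where the theorem is genuinely needed. Without it, an excess component could make $c_6(E)$ fail to dominate the contributions of the planes.
\end{enumerate}

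Next, relate planes to $V_6$. If $P\subset X$ is a plane, then every pointed line $(\ell,p)$ with $\ell\subset P$ lies in $V_{d+1}(X)\subset V_6(X)$, since $d\ge 6$. The pointed lines in $P$ form a $3$-dimensional family $\mathbb{F}_P\cong\{(\ell,p)\}$, a $\PP^1$-bundle over $(\PP^2)^*$. Each $\mathbb{F}_P$ is therefore an irreducible component of $V_6(X)$. Distinct planes give distinct components.

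To count the planes, pair $[V_6(X)]$ with a positive class $\alpha$ of complementary dimension $3$ chosen as in \cite{PRST}, for instance a suitable monomial in the hyperplane class $h$ of $\PP^5$ and the Plücker class $\sigma_1$. Choose $\alpha$ so that $\alpha\cdot[\mathbb{F}_P]\ge 1$ for every plane. A natural choice is $\sigma_1^3$ or $h\sigma_1^2$, where one checks $\int_{\mathbb{F}_P}\alpha=1$. Since all components of $V_6(X)$ have dimension $3$ and $\alpha$ is a product of nef (basepoint-free) classes, every component contributes nonnegatively. This gives
\[
\#\{\text{planes in }X\}\le \int_{\mathbb{F}} c_6(E)\,\alpha .
\]
Evaluating the right-hand side is a routine Chern class computation on the flag bundle using the filtration of $E$. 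It is exactly the computation recorded in \cite{PRST}, and it yields the cubic polynomial $225d^3-1370d^2+1800d$.

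The main obstacle is not the corollary itself; it is Theorem \ref{main}, which rules out excess $4$-dimensional components of $V_6(X)$. Given the theorem, the only delicate point is the nonnegativity of the remaining contributions, and it holds because $\alpha$ is basepoint-free and $V_6(X)$ is of the expected dimension. Whether $\alpha$ can be taken so that the normalization $\int_{\mathbb{F}_P}\alpha=1$ holds must still be checked against the \cite{PRST} choice.
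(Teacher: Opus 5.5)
Your framework matches the paper's, which simply combines Theorem \ref{main} with the computation of \cite{PRST}. You use the rank-$6$ jet bundle $E$ on the $9$-dimensional $\mathbb{F}$, $[V_6(X)]=c_6(E)\cap[\mathbb{F}]$ by purity, each $\mathbb{F}_P$ as a component, and a basepoint-free test class $\alpha$. The gap is that the whole numerical content of the corollary lies in the choice of $\alpha$, and neither of your candidates works.

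Write $\zeta=c_1(T_p^*\ell)$, so that $\mathbb{F}=\PP(T_{\PP^5})$ and $c_6(E)=\prod_{j=0}^{5}(dh+j\zeta)$. Then $\sigma_1=2h+\zeta$, $\sigma_{11}=h^2+h\zeta$, and
\[
\int_{\mathbb{F}}h^5\zeta^4=1,\qquad \int_{\mathbb{F}}h^4\zeta^5=-6,\qquad \int_{\mathbb{F}}h^3\zeta^6=21.
\]
\begin{itemize}
\item On $\mathbb{F}_P$ the class $\sigma_1$ is pulled back from the dual plane $P^\vee\cong\PP^2$ of lines in $P$. Hence $\int_{\mathbb{F}_P}\sigma_1^3=0$, and $\sigma_1^3$ does not detect planes at all.
\item The class $h\sigma_1^2$ does restrict to $1$ on $\mathbb{F}_P$. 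But it is divisible only by $h$, so the $85\,d^4h^4\zeta^2$ term of $c_6(E)$ survives. You get $\int_{\mathbb{F}}c_6(E)\,h\sigma_1^2=85d^4+O(d^3)$, which is only an $O(d^4)$ bound.
\end{itemize}
So your assertion that the evaluation yields $225d^3-1370d^2+1800d$ is false for both choices you propose.

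The class that gives the stated bound is $\alpha=h\,\sigma_{11}$. It is represented by $\{(\ell,p):\ell\subset H,\ p\in H'\}$ for general hyperplanes $H,H'$. This cycle meets each $\mathbb{F}_P$ transversally in the single point $(P\cap H,\,P\cap H\cap H')$. Expand $c_6(E)=\sum_{k=0}^{5}e_k\,d^{6-k}h^{6-k}\zeta^k$, where $e_k$ are the elementary symmetric functions of $1,\dots,5$, so $e_3=225$, $e_4=274$, $e_5=120$. Since $h\sigma_{11}=h^2(h+\zeta)$ and $h^6=0$, all terms with $k\le 2$ drop out, which is the source of the cubic growth. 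One gets
\[
\int_{\mathbb{F}}c_6(E)\,h\sigma_{11}=225d^3+274(1-6)d^2+120(-6+21)d=225d^3-1370d^2+1800d,
\]
exactly the polynomial in the statement. For comparison, $h^2\sigma_1=h\sigma_{11}+h^3$ also restricts to $1$ on $\mathbb{F}_P$, but it gives the weaker bound $225d^3-1096d^2+1080d$. With $\alpha=h\sigma_{11}$, the rest of your argument goes through as written.
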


For $d \geq 3$, the number of $2$-planes in a smooth hypersurface of degree $d$ in $\PP^5$ is finite \cite{beheshti-IMRN}. The corollary improves the
$O(d^4)$ bound of \cite{PRST} to a bound with optimal order of
growth: the Fermat hypersurface of degree $d\geq 3$ in $\PP^5$
contains at least $15d^3$ planes. The use of high-contact loci to bound the number of linear
spaces in smooth hypersurfaces was also discussed by
Starr \cite{Starr2006}. For
comparison, Degtyarev, Itenberg, and Ottem \cite{DIJ} proved, using
lattice theory, that a smooth complex cubic fourfold contains at most
$405$ planes.

The idea of the proof is inspired by \cite{beheshti-crelle}, where it is proved that the Fano variety of lines on every smooth Fano hypersurface of degree $\leq 6$ has the expected dimension. The case $d=n=5$ of our
theorem follows from this result. As we show in Lemma~\ref{d=6}, the case
$d=6$ also follows directly from the known case $d=n=6$ of the Debarre-de Jong conjecture. For $d\geq 7$, the same construction no longer
reduces the problem to the Fano scheme of lines on an auxiliary
hypersurface. Instead, we adapt the deformation-theoretic approach
of \cite{beheshti-crelle} to the cones swept out by lines having intersection
multiplicity at least $6$ with $X$.

Section 2 recalls preliminary results on lines with high-order contact. In Section 3, we reduce the theorem to excluding a four-dimensional family of lines in $\mathbb \PP^5$ having contact order at least six with $X$ and sweeping out a hypersurface $Z$. In Section 4, we study deformations of the cones swept out by lines of the family through a general point of $Z$, and complete the proof by a cohomological vanishing calculation on the associated ruled surfaces.

\subsection*{Acknowledgment}
We are grateful to Raymond Cheng and Eric Riedl for helpful conversations. Roya Beheshti was partially supported by Simons Foundation grant 00007742.

\subsection*{AI disclosure:} The core ideas and all proofs in this work were developed by the authors, inspired by \cite{beheshti-crelle}. ChatGPT was used during the preparation of the paper solely for proofreading and expository assistance. During this review, a gap in Section 3 was identified, which was subsequently resolved by the authors.

\section{Preliminaries}

In this section, we recall several facts about lines with high-order
contact. Throughout, we work over an algebraically
closed field of characteristic $0$.

We write $F(X)$ for the Fano variety of lines on $X$. Let $X\subset\PP^n$ be a smooth hypersurface of degree $d\geq2$,
and let $p\in X$. Choose homogeneous coordinates so that
$p=(1:0:\cdots:0)$, and write a defining equation of $X$ as
\[
f=x_0^{d-1}f_1+x_0^{d-2}f_2+\cdots+f_d,
\]
where each $f_i$ is homogeneous of degree $i$ in $x_1,\ldots,x_n$.
For $1\leq k\leq d+1$, the scheme of lines through $p$ having
contact of order at least $k$ with $X$ is identified with
\[
\Sigma_k^p(X)=V(f_1,\ldots,f_{k-1})\subset\PP^{n-1}.
\]
The lines contained in $X$ are
understood to have infinite contact order. In particular, the scheme
of lines contained in $X$ and passing through $p$ is
\[
F^p(X)=V(f_1,\ldots,f_d).
\]

A basic question is whether these contact equations meet properly
when $p$ is general. The first nonlinear condition occurs for
contact of order at least $3$. Since $X$ is smooth, $f_1\neq0$,
and the restriction of $f_2$ to the hyperplane $V(f_1)$ represents
the second fundamental form $\mathrm{II}_p$ of $X$ at $p$.
This form is nondegenerate at a general point. More generally,
we have the following degeneracy bound, recalled in
\cite[Theorem 3.1]{PRST}.

\begin{proposition}\label{secondfundamentalform}
For $0\leq i\leq n-1$, let
$
R_i=\{p\in X:\operatorname{rank}\mathrm{II}_p\leq n-1-i\}.
$
Then
\[
\operatorname{codim}_X R_i\geq i.
\]
\end{proposition}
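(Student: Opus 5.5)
The plan is to realize $R_i$ as a degeneracy locus of a symmetric map of vector bundles on $X$ and bound its codimension by a local argument using the Gauss map. Since $X$ is smooth of degree $d\geq 2$ in characteristic $0$, the Gauss map $\gamma\colon X\to(\PP^n)^\vee$, $p\mapsto T_pX$, is finite (Zak's theorem). In particular it is unramified at a general point, although it may fail to be unramified everywhere. The second fundamental form at $p$ is the differential of $\gamma$,
\[
d\gamma_p\colon T_pX\to \operatorname{Hom}(N^\vee_p,\, T_pX^\vee)\cong T_pX^\vee\otimes N_p,
\]
viewed as a symmetric form on $T_pX$ with values in $N_p=\cO_X(d)|_p$. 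Thus $\operatorname{rank}\mathrm{II}_p=\operatorname{rank} d\gamma_p$, and $R_i$ is the locus where the kernel of $d\gamma$ has dimension at least $i$. (Here $T_pX$ has dimension $n-1$, and in the paper's coordinates it is the hyperplane $V(f_1)$.)

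The key step will be the standard bound for a finite morphism: if $\gamma\colon X\to Y$ is finite and $S\subseteq X$ is an irreducible subvariety along which $\dim\ker d\gamma\geq i$, then $\dim S\leq \dim X-i$. I would prove it as follows. Let $S$ be an irreducible component of $R_i$, let $p$ be a general point of $S$, and set $K=\ker d\gamma_p$, so $\dim K\geq i$. Since $\gamma|_S$ is finite, it is generically unramified in characteristic $0$, and hence $d(\gamma|_S)_p$ is injective on $T_pS$. This forces $T_pS\cap K=0$. Therefore $\dim S+i\leq \dim T_pS+\dim K\leq \dim T_pX=n-1$, which gives $\operatorname{codim}_X S\geq i$.

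The main obstacle is justifying finiteness of $\gamma$ for every smooth hypersurface, including the fact that its fibres are finite. I would either cite Zak's theorem, or argue directly: a positive-dimensional fibre of $\gamma$ would be a curve $C\subset X$ along which $X$ has a constant tangent hyperplane $H$. Then $H\cap X$ would be singular along all of $C$. But a hyperplane section of a smooth hypersurface of dimension at least $1$ has only finitely many singular points, which follows from the fact that the dual variety construction is reflexive in characteristic $0$. Finally, generic smoothness of $\gamma|_S$ is exactly where characteristic $0$ is used.
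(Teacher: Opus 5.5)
Your argument is correct. Note that the paper gives no proof of this proposition: it only recalls it from \cite[Theorem 3.1]{PRST}, so there is no in-paper argument to compare with. What you give is the standard self-contained proof. You identify $\operatorname{rank}\mathrm{II}_p$ with $\operatorname{rank} d\gamma_p$ and use that $\gamma$ is finite. Then generic \'etaleness of $\gamma|_S$ on a component $S$ of $R_i$ forces $T_pS\cap\ker d\gamma_p=0$ at a general $p\in S$, hence $\dim S\leq n-1-i$. Characteristic $0$ enters only through that generic \'etaleness. For the rank identification, a one-line check is worth recording. By Euler's formula the Hessian $\mathcal H$ of $f$ at $\hat p$ satisfies $\mathcal H\hat p=(d-1)\nabla f(\hat p)$. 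So the form $w^{T}\mathcal H v$ on the affine cone over the embedded tangent hyperplane kills $\hat p$ and descends to $T_pX$. In the paper's coordinates it is a nonzero multiple of $f_2|_{V(f_1)}$.

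The step you single out as the main obstacle is easier than you suggest, and the paper uses it freely (``the finite Gauss map of $X$'' in Case~1 of Section~3). The map $\gamma$ is given by the partials $\partial f/\partial x_j$, which have no common zero on $X$ because $X$ is smooth. Hence $\gamma^*\cO(1)\simeq\cO_X(d-1)$, which is ample for $d\geq 2$. So $\gamma$ contracts no curve and, being proper, is finite.

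By contrast, your ``direct'' justification via reflexivity does not work as stated. Reflexivity describes the fibres of the conormal variety over smooth points of $X^\vee$; even there, getting a single point uses that $X^\vee$ is a hypersurface, which is itself usually deduced from finiteness of $\gamma$. It says nothing directly about hyperplanes $H$ at singular points of $X^\vee$, which is exactly where a positive-dimensional $\operatorname{Sing}(H\cap X)=\gamma^{-1}(H)$ would have to occur. So either cite Zak's theorem or use the ampleness argument above.
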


We next derive a consequence of Landsberg’s theorem \cite{Landsberg}, whose formulation in \cite[Theorem 1.1]{PRST} we use below.

\begin{proposition}\label{degree-lines-through-point}
Let $X$ be a reduced, irreducible hypersurface in $\PP^n$. Suppose that $G$ is an irreducible component of $F(X)$ whose lines sweep out
$X$. Let $p$ be a general point of $X$ and let $\Sigma$ be an
irreducible component of
$
G^p=\{\ell\in G:p\in\ell\}.
$
Set $r = \dim \Sigma$ and $m=n-1-r$. Then
\[
\deg\Sigma\leq m!.
\]
If equality holds, then $\Sigma$ is a complete intersection of
type $(1,2,\ldots,m)$ in $\PP^{n-1}$.
\end{proposition}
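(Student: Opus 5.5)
We derive the statement from Landsberg's theorem in the form \cite[Theorem 1.1]{PRST}. The plan is to set up the local equations at a general point $p$ so that $G^p$ sits inside a complete intersection of type $(1,2,\ldots,m)$, and then bound the degree of $\Sigma$ by B\'ezout.

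First we fix the coordinates of the preliminaries, with $p=(1:0:\cdots:0)$ and $f=\sum x_0^{d-i}f_i$. Every line of $G$ through $p$ lies in $X$, so $G^p\subset F^p(X)=V(f_1,\ldots,f_d)\subset\PP^{n-1}$. Since the lines of $G$ sweep out $X$ and $p$ is general, $G^p$ is nonempty. By generic smoothness in characteristic zero, for general $p$ every component of $G^p$ has dimension $\dim G-(n-2)$, so $r$ is the same for all components $\Sigma$ and $m=n-1-r$.

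Landsberg's theorem, as formulated in \cite[Theorem 1.1]{PRST}, concerns the locus of lines through a general point $p$ lying in a covering family of lines. Its content is that this locus is cut out near each of its general points by the first $m$ fundamental forms. More precisely, $\Sigma$ is an irreducible component of $V(f_1,\ldots,f_m)$, and after passing to general point equations, $\Sigma$ is a component of a locus defined by $m$ equations of degrees $1,2,\ldots,m$ whose intersection has codimension $m$ at $\Sigma$. The key point is the codimension count: $\Sigma$ has codimension exactly $m$ in $\PP^{n-1}$, and the theorem guarantees that already $f_1,\ldots,f_m$ cut out a scheme of which $\Sigma$ is a component of the expected dimension $r$. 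I expect this to be the main obstacle, namely that $\Sigma$ is a component of $V(f_1,\ldots,f_m)$ and not merely of $V(f_1,\ldots,f_d)$. This is exactly what we take from \cite[Theorem 1.1]{PRST}. If that formulation instead gives only the statement about the degree directly, we simply quote it.

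Granting this, B\'ezout's theorem (refined, for a component of an intersection of $m$ hypersurfaces having the expected codimension) gives
\[
\deg \Sigma \leq \prod_{i=1}^m \deg f_i = m!.
\]
For the equality case, equality in refined B\'ezout forces $V(f_1,\ldots,f_m)$ to have no other components, no embedded structure, and multiplicity one along $\Sigma$. Hence $V(f_1,\ldots,f_m)=\Sigma$ scheme-theoretically. Since $m$ equations cut out a scheme of codimension $m$, $\Sigma$ is a complete intersection of type $(1,2,\ldots,m)$.
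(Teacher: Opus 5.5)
Your overall route is the paper's: show that $\Sigma$ is an irreducible component of $V(f_1,\ldots,f_m)$, then apply refined B\'ezout. Your equality analysis is fine. The gap is that you never prove the central step. You attribute to \cite[Theorem 1.1]{PRST} the conclusion that $\Sigma$ is a component of $V(f_1,\ldots,f_m)$, but Landsberg's theorem, in the form used here, says something different. It says that at a general point $p\in X$, any irreducible component of $V(f_1,\ldots,f_k)$ of dimension larger than the expected $n-1-k$ consists of lines contained in $X$.

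Turning this into the component statement requires the hypothesis that $G$ is an irreducible component of $F(X)$, which your proposal never uses. That hypothesis is indispensable. Let $X\cong\PP^3$ be a hyperplane in $\PP^4$, and let $G$ be the lines of $X$ meeting a fixed irreducible space curve of degree $e\geq 3$. Then $G$ is irreducible and sweeps out $X$. For general $p$, however, $G^p$ is the projection of that curve from $p$, an irreducible plane curve of degree $e$. So $r=1$, $m=2$, and $\deg\Sigma=e>2!$. The black box you quote, stated for an arbitrary covering family of lines, is therefore false, and hedging with ``we simply quote it'' does not fill the step.

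The missing argument is short. First, because $G$ is a component of $F(X)$ and $p$ is general, $\Sigma$ is an irreducible component of $F^p(X)=V(f_1,\ldots,f_d)$. Indeed, for any other component $G'$, the lines of $G\cap G'$ through a general $p$ form a locus of dimension $<r$. In particular $m\leq d$, since every component of an intersection of $d$ hypersurfaces has codimension at most $d$, and the case $m=d$ is immediate.

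Now suppose $m<d$ and $\Sigma$ is not a component of $V(f_1,\ldots,f_m)$. Then $\Sigma$ lies properly in a component $\Sigma'$ with $\dim\Sigma'\geq r+1>n-1-m$. So $\Sigma'$ is an excess-dimensional family of lines with contact order at least $m+1$ at the general point $p$. Landsberg's theorem then puts every line of $\Sigma'$ in $X$, that is, $\Sigma'\subset F^p(X)$. This contradicts $\Sigma$ being a component of $F^p(X)$. With this step supplied, your B\'ezout bound and equality argument go through as written.
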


\begin{proof}
Let $d=\deg X$. Choose homogeneous coordinates such that
$p=(1:0:\cdots:0)$, and write a defining equation of $X$ as
\[
f=x_0^{d-1}f_1+x_0^{d-2}f_2+\cdots+f_d,
\]
where each $f_i$ is homogeneous of degree $i$ in
$x_1,\ldots,x_n$.
Since $G$ is an irreducible component of $F(X)$ and $p$ is a general point of $X$, $\Sigma$ is an irreducible component of
$F^p(X)=V(f_1,\ldots,f_d)$. We claim that $\Sigma$ is an irreducible component of
$V(f_1,\ldots,f_m)$. This is immediate if $m=d$. If $m<d$ and
the claim fails, there is an irreducible component $\Sigma'$ of
$V(f_1,\ldots,f_m)$ that properly contains $\Sigma$, so
\[
\dim\Sigma'\geq r +1.
\]
Every line parametrized by $\Sigma'$ has contact of order at least
$m+1$ with $X$ at $p$. The expected dimension of this space is 
$$ n-(m+1)=r.$$
Since $p$ is a general point of $X$ and since $\Sigma'$ is larger than expected, by \cite[Theorem 1.1]{PRST}, all lines parametrized by $\Sigma'$
are contained in $X$. Thus $\Sigma'\subset F^p(X)$,
contradicting the fact that $\Sigma$ is an irreducible component
of $F^p(X)$.

The degree bound now follows since any irreducible component of $V(f_1, \dots, f_m)$ has degree $\leq m!$, and if the degree is equal to $m!$, this component should be equal to the complete intersection $V(f_1, \dots, f_m)$. 

\end{proof}

Let $V_k(X)$ denote the scheme of pointed lines $(\ell,p)$ with
$p\in\ell$ and contact of order at least $k$ with $X$ at $p$.
Patel--Riedl--Smith--Tseng prove that a component of this scheme
has the expected dimension whenever its lines sweep out the
ambient projective space \cite[Theorem 2.8]{PRST}.

\begin{theorem}\label{covering-contact}
Let $W$ be an irreducible component of $V_k(X)$, with
$1\leq k\leq d$. If
\[
\overline{\bigcup_{(\ell,p)\in W}\ell}=\PP^n,
\]
then
\[
\dim W=2n-k-1.
\]
\end{theorem}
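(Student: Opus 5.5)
The plan is to bound the dimension of the Zariski tangent space of $V_k(X)$ at a general point $(\ell,p)\in W$ by $2n-k-1$. Since every component has dimension at least $2n-k-1$, this gives equality. We work with the incidence correspondence $\widetilde W=\{(\ell,p,q): (\ell,p)\in W,\ q\in\ell\}$ of dimension $\dim W+1$. By hypothesis the map $\pi:\widetilde W\to\PP^n$, $(\ell,p,q)\mapsto q$, is dominant. In characteristic $0$, generic smoothness then shows that at a general point $(\ell,p,q)$ the differential $d\pi$ is surjective onto $T_q\PP^n$. Our task is to convert this surjectivity into independence of the $k-1$ contact conditions.

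\emph{Tangent space.} Choose coordinates in which $\ell=\{x_2=\cdots=x_n=0\}$ and $p=(1:0:\cdots:0)$. Then $N_{\ell/\PP^n}\cong\cO_\ell(1)^{n-1}$, and a first-order deformation of $(\ell,p)$ is a pair $(\sigma,\tau)$, where $\sigma=(\sigma_2,\dots,\sigma_n)\in H^0(\cO_\ell(1)^{n-1})$ and $\tau$ is a tangent vector moving $p$ along $\ell$. Write $g=f|_\ell\in H^0(\cO_\ell(d))$, which vanishes to order $\ge k$ at $p$. The condition that $(\sigma,\tau)$ lies in $T_{(\ell,p)}V_k(X)$ is that
\[
\sum_{i\ge 2}\sigma_i\,\frac{\partial f}{\partial x_i}\Big|_\ell+\tau\cdot g'
\]
vanishes to order $\ge k$ at $p$. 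The term $\tau g'$ already vanishes to order $\ge k-1$, so the $\tau$-direction imposes at most one new condition. Consequently it suffices to show that the linear map
\[
\Phi: H^0(\cO_\ell(1)^{n-1})\longrightarrow J^{k-1}_p\big(\cO_\ell(d)\big)/\langle \text{jets of } g'\rangle,
\qquad \sigma\mapsto \sum_i \sigma_i\,\partial_i f|_\ell ,
\]
has rank $k-1$ in the relevant quotient, i.e.\ the target modulo the coordinate-change directions. Granting this, $\dim T_{(\ell,p)}W\le 2(n-1)+1-(k-1)-1+\dots$, which after bookkeeping equals $2n-k-1$.

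\emph{Using domination.} Surjectivity of $d\pi$ at general $(\ell,p,q)$ means there are tangent vectors $(\sigma,\tau)\in T W$ whose normal section $\sigma$ takes an arbitrary value in $N_{\ell,q}$ at the general point $q\in\ell$. I will compare this with the linear space of all admissible $\sigma$ defined by the contact conditions. If $\Phi$ dropped rank, then $T_{(\ell,p)}W$ would exceed $2n-k-1$. The argument is to show that the excess kernel directions must then consist of sections $\sigma$ vanishing at a fixed point of $\ell$ forced by the jets, which would prevent $d\pi$ from being surjective at a general $q$. Concretely, a rank drop yields a relation among the partials $\partial_i f|_\ell$ modulo $t^k$ at $p$. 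Such a relation can be shown to force all deformations in $W$ to be "tangent to a cone", i.e.\ to keep $\ell$ inside a proper subvariety. That contradicts domination.

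\emph{Main obstacle.} The hard step is making this last implication precise. A small tangent space could a priori come from non-reducedness rather than from a geometric constraint, and I need to turn a dependency of jets into a constraint on the sections of $\cO(1)^{n-1}$ evaluated at the general point $q$. I would first handle this via the degree count: $\sigma_i$ has degree $1$, so a relation among $k-1\le d$ jets can be pulled back to a statement on all of $\ell$. Failing that, I would fall back on projecting from a general $q$. With $q\notin X$, writing $f(sq+tv)=\sum s^{d-i}t^i g_i(v)$ turns the contact condition into the condition that the binary form $f|_\ell$ has a $k$-fold root, which has codimension exactly $k-1$ in the space of binary forms. The problem then reduces to showing that, for general $q$, the map $v\mapsto (g_i(v))$ meets this discriminant-type locus properly, and generality of $q$ supplies the needed transversality.
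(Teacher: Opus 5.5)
\textbf{There is a genuine gap: the proposal never proves its key step.}

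Your set-up is sound. In fact your target statement is equivalent to $H^1(\ell,M)=0$, where $M=\ker\bigl(T_{\PP^n}|_\ell\to\cO_\ell(d)|_{kp}\bigr)$. The reason is that the $T_\ell\simeq\cO_\ell(2)$ summand of $T_{\PP^n}|_\ell$ contributes exactly the jets of $g'$. So $\Phi$ surjects onto $J^{k-1}_p(\cO_\ell(d))/\langle j(g')\rangle$ if and only if $H^0(T_{\PP^n}|_\ell)\to H^0(\cO_\ell(d)|_{kp})$ is onto, and $H^1(T_{\PP^n}|_\ell)=0$ makes that the same as $H^1(\ell,M)=0$.

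The proposal gives no argument for this step, which is the whole content of the theorem, as you acknowledge. Your heuristics do not close it. Surjectivity of $d\pi$ is about values of deformation sections at a general point $q\neq p$, while your conditions are jets at $p$, and no jet computation at $p$ relates the two. The mechanism you guess, excess directions vanishing at a forced point of $\ell$, is not what actually happens. The fallback via projection from $q$ only restates the theorem. Properness of the locus of lines through a general $q$ with a $k$-fold contact point is equivalent to $\dim W=2n-k-1$ for the covering components. Generality of $q$ gives no transversality on its own, since there is no group action for a Kleiman--Bertini argument to use.

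The missing idea is to treat the contact conditions globally on $\ell\simeq\PP^1$ through $M$, a rank-$n$ bundle of degree $n+1-k$, and apply Grothendieck splitting. This is the argument of \cite[Theorem 2.8]{PRST} that the paper cites and adapts in Proposition~\ref{Ysingularities}.
\begin{itemize}
\item Work with the family $T$ of parametrizations $\iota:\PP^1\xrightarrow{\sim}\ell$ with $\iota(0)=p$. It has dimension $\dim W+2$. Its tangent vectors give sections in $H^0(\PP^1,M)$, and the motion of $p$ along $\ell$ is absorbed into reparametrizations.
\item For fixed $q\in\PP^1\setminus\{0\}$, the map $\mathrm{ev}_q:T\to\PP^n$ is dominant. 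By generic smoothness these sections span $M_q=T_{\iota(q)}\PP^n$ at a general point.
\item So $M$ is generically globally generated, hence $M\simeq\bigoplus\cO(a_i)$ with all $a_i\geq 0$.
\item Therefore $H^1(M)=0$ and $h^0(M)=\deg M+\operatorname{rk}M=2n+1-k$, which gives $\dim W+2\leq 2n+1-k$.
\end{itemize}
In your language: a dependency among the jet conditions forces a summand $\cO(a)$ with $a\leq -2$. Then every global section of $M$, in particular every first-order deformation in $W$, lies in the proper subbundle of nonnegative summands and cannot span $T_q\PP^n$ at any $q$. That is the precise form of your ``tangent to a cone'' intuition.
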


The same deformation-theoretic argument gives a nonexistence
statement for high-contact lines covering a subvariety of
$\PP^n$.

\begin{proposition}\label{Ysingularities}
Let $Z\not\subset X$ be an irreducible subvariety of $\PP^n$
of dimension $r$. There is no irreducible family $F$ of lines
sweeping out $Z$ such that a general line parametrized by $F$
is contained in the smooth locus of $Z$ and has contact of
order at least $r+2$ with $X$ at a smooth point of $X\cap Z$.
\end{proposition}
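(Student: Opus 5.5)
Suppose for contradiction that such a family $F$ exists, and replace it by an irreducible component of the incidence of pointed lines $(\ell,p)$ with $\ell\in F$, $p\in\ell\cap X$, and contact of order at least $r+2$ at $p$. The plan is to run the argument behind Theorem~\ref{covering-contact} inside $Z$ rather than inside $\PP^n$. Since $F$ sweeps out $Z$, the general line of $F$ through a general point $z\in Z$ is \emph{free} in the smooth locus $Z^{sm}$. By generic smoothness in characteristic $0$, the normal bundle $N_{\ell/Z}\cong\bigoplus_{i=1}^{r-1}\cO(a_i)$ has all $a_i\geq 0$, and the deformations of $\ell$ in $F$ surject onto $N_{\ell/Z}$ at a general point of $\ell$.

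The key step is to compare tangent spaces with the contact conditions. Fix a general $(\ell,p)$ and write $D$ for the divisor $(r+2)p$ on $\ell\cong\PP^1$. The condition that $\ell$ meets $X$ with multiplicity $\geq r+2$ at $p$ is expressed by the restriction $f|_\ell\in H^0(\cO_\ell(d))$ vanishing on $D$. A first-order deformation of $(\ell,p)$ inside $Z$ is given by a section $\sigma\in H^0(N_{\ell/Z})$ together with a motion of $p$. Preserving the contact condition means that the derivative $df(\sigma)$, which is a section of $\cO_\ell(d)$, vanishes to order $r+1$ at $p$ modulo the terms coming from moving $p$. I will define the map
\[
\phi\colon N_{\ell/Z}\longrightarrow N_{X}|_\ell=\cO_\ell(d),\qquad \sigma\mapsto df(\sigma),
\]
and, since $Z\not\subset X$ and $\ell\not\subset X$, check that the relevant sections are nonzero.

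The core estimate runs as follows. Because $F$ sweeps out $Z$ and $p$ is a general point of $X\cap Z$, which is a smooth point of $X\cap Z$, the tangent directions of the family at $p$ span $T_pZ$. After twisting, the deformations of $(\ell,p)$ give at least $r-1$ independent normal directions. Each of these must satisfy that its image under $\phi$ lies in $H^0(\cO_\ell(d)(-D'))$ for a divisor $D'$ of length about $r+1$ at $p$. I will then argue as in Patel--Riedl--Smith--Tseng. Taking the generically surjective evaluation of sections of $N_{\ell/Z}$, the induced map $N_{\ell/Z}\to \cO_\ell(d)$ vanishing to high order forces a Wronskian-type determinant to vanish. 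Concretely, consider the osculating data: the images of $\sigma_1,\dots,\sigma_{r-1}$ together with $f|_\ell$ and the tangent motion of $p$ give $r+1$ sections of $\cO_\ell(d)$ all vanishing at $p$ to orders forced $\geq r+1$. Using the transversality $T_p(X\cap Z)$ of codimension one in $T_pZ$, their jets at $p$ must span an $(r+1)$-dimensional space of jets of order $\leq r$ vanishing at $p$, which has dimension only $r$. This is a contradiction. Equivalently, the count is the dimension comparison: the family has dimension $\geq 2r-2$ (pointed lines covering $Z$, with $\ell$ free through general points), while the expected dimension of pointed lines in $Z$ with contact $\geq r+2$ is $2r-2-(r+1)+r\cdot 1=\ldots<$ that. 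The dimension estimate itself uses freeness.

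I expect the main obstacle to be making the jet/Wronskian step rigorous. Specifically, one must show that the tangent vectors to $F$ at general $(\ell,p)$ give sections whose images in $\cO_\ell(d)$ have jets at $p$ that are as independent as claimed. This requires smoothness of $X\cap Z$ at $p$, so that $df|_{T_pZ}\neq0$, together with characteristic zero via generic smoothness. I would first try to follow the proof of \cite[Theorem 2.8]{PRST} verbatim, replacing $T\PP^n$ by $T_Z$ and $N_{\ell/\PP^n}$ by $N_{\ell/Z}$. That proof only uses freeness of $\ell$ in the swept-out variety and the smoothness of $X$ along the contact point.
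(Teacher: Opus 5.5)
Your proposal has a genuine gap at its core step.

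\textbf{The map is not defined.} The map $\phi\colon N_{\ell/Z}\to\cO_\ell(d)$, $\sigma\mapsto df(\sigma)$, is not well defined. $df$ does not vanish on $T_\ell\subset T_Z|_\ell$: its value on a tangent field of $\ell$ is the derivative of $f|_\ell$, which is nonzero and vanishes to order only $r+1$ at $p$. That is why you had to add ``modulo the terms coming from moving $p$''. Your argument never recovers a clean linear condition from this.

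\textbf{The jet count is self-contradictory.} If your $r+1$ sections of $\cO_\ell(d)$ vanish to order $\geq r+1$ at $p$, their jets of order $\leq r$ are zero, so they cannot span an $(r+1)$-dimensional space. Nothing forces them to be independent in the first place. Smoothness of $X\cap Z$ at $p$ only gives $df|_{T_pZ}\neq0$, which is a zeroth-order statement. Likewise, the hypotheses do not make $p$ a general point of $X\cap Z$. Deformations of the family span $T_zZ$ at a general point $z$ of $\ell$, not at the special point $p$.

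\textbf{The dimension comparison fails.} A family of lines sweeping out an $r$-fold need only have dimension $r-1$, not $2r-2$. No contradictory inequality comes out of this count.

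\textbf{The wrong ingredient.} The property you single out, freeness of $\ell$ (the \emph{lower} bound $a_i\geq0$), is not what drives a contradiction. What is needed is an \emph{upper} bound on degrees.

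\textbf{The missing idea.} Fix the contact point on the source by considering parametrizations $\iota\colon\PP^1\xrightarrow{\sim}\ell$ with $\iota(0)=p$.
\begin{itemize}
\item First-order deformations of such maps that keep contact order $r+2$ at the fixed point $0$ are global sections of the kernel $M$ of the map $T_Z|_\ell\to\cO_\ell(d)|_{(r+2)p}$ induced by $df$.
\item This map is surjective precisely because $X\cap Z$ is smooth at $p$. So $M$ has rank $r$ and $\deg M=\deg(T_Z|_\ell)-(r+2)$.
\item From $0\to\cO_\ell(2)\to T_Z|_\ell\to N_{\ell/Z}\to0$ and $N_{\ell/Z}\hookrightarrow N_{\ell/\PP^n}\simeq\cO_\ell(1)^{\oplus(n-1)}$, every summand of $N_{\ell/Z}$ has degree $\leq1$. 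Hence $\deg(T_Z|_\ell)\leq r+1$ and $\deg M\leq-1$.
\item Fix $q\neq0$. Generic smoothness of the dominant evaluation map at $q$ gives $r$ sections of $M$ that are independent at $q$. Their wedge is then a nonzero section of the negative-degree line bundle $\det M$, which is impossible.
\end{itemize}
This is the adaptation of \cite[Theorem 2.8]{PRST} you gesture at in your last paragraph. It works with the rank-$r$ bundle $T_Z|_\ell$ on pointed parametrized lines, not with $N_{\ell/Z}$, and uses $N_{\ell/Z}$ only to bound degrees from above.
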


\begin{proof}
We adapt the proof of \cite[Theorem 2.8]{PRST}. Put $k=r+2$.
Since $Z\not\subset X$ and $F$ sweeps out $Z$, a general line
parametrized by $F$ is not contained in $X$. Its contact points
with $X$ are therefore finite. After replacing $F$ by an
irreducible generically finite cover, we may mark a contact point
$p$ of order at least $k$ that is smooth on $X\cap Z$. We restrict
to the open subset on which every line is contained in the smooth locus of 
$Z$.

Consider the family $T$ of all parametrizations
$\iota:\PP^1\xrightarrow{\sim}\ell$ of these marked lines
satisfying $\iota(0)=p$. For a fixed $q\in\PP^1\setminus\{0\}$,
the evaluation map
$
\operatorname{ev}_q:T\longrightarrow Z
$
is dominant, so by generic smoothness, its differential is surjective at a
general point $t\in T$.

Fix such a parametrization and identify $\PP^1$ with $\ell$.
Let $kp\subset\ell$ denote the length-$k$ subscheme supported
at $p$. Since the defining section $f$ of $X$ vanishes on $kp$,
its differential induces a well-defined map
\[
T_Z|_\ell\longrightarrow\cO_\ell(d)|_{kp}.
\]
Since $X\cap Z$ is smooth at $p$, this map is nonzero on the
fiber at $p$ and hence is surjective. 
Its kernel $M$ is a vector
bundle of rank $r$, and
\[
\deg M=\deg(T_Z|_\ell)-k.
\]

We have the short exact sequence

\[
0\longrightarrow\cO_\ell(2)
\longrightarrow T_Z|_\ell
\longrightarrow N_{\ell/Z}
\longrightarrow0.
\]
Since $
N_{\ell/Z}\hookrightarrow
N_{\ell/\PP^n}\simeq\cO_\ell(1)^{\oplus(n-1)}$,  every summand in the splitting of $N_{\ell/Z}$ has degree at
most $1$. Since its rank is $r-1$, we obtain
\[
\deg(T_Z|_\ell)\leq2+(r-1)=r+1,
\qquad
\deg M\leq(r+1)-(r+2)=-1.
\]

Every tangent vector to $T$ at $t$ induces a global section of
$T_Z|_\ell$. Since the family preserves contact of order at
least $k$ at the fixed source point $0$, this section belongs
to $H^0(\ell,M)$. Surjectivity of the differential of
$\operatorname{ev}_q$ therefore gives $r$ such sections whose
values at $q$ are linearly independent. Their exterior product
is a nonzero global section of $\det M$, contradicting
$\deg M<0$.
\end{proof}

We will also use the following classical classification
of projective varieties containing large families of lines.

\begin{proposition}[Segre's classification;
see {\cite[Theorem~1]{Rogora}}]
\label{prop:Segre}
Let $Y\subset\mathbb P^N$ be an integral projective variety
of dimension $m\geq3$ over an algebraically closed field
of characteristic zero. Suppose that
\[
\dim F(Y)\geq2m-3.
\]
Then $Y$ is one of the following:
\begin{enumerate}
\item a linear space;
\item a possibly singular quadric hypersurface in its
linear span;
\item a scroll swept out by a one-parameter family of
$(m-1)$-planes. 
\end{enumerate}
\end{proposition}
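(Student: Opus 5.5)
The plan is the classical dimension count on lines through a general point, followed by a case analysis on the cone these lines sweep out. Since this proposition is quoted from \cite{Rogora}, what follows is only a sketch of why it holds.

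\textbf{Lines through a general point.} Let $G\subset F(Y)$ be a component with $\dim G\geq 2m-3$, and let $U\to G$ be its universal line. Then $\dim U\geq 2m-2$. If the lines of $G$ do not cover $Y$, they sweep out a proper subvariety $Y'$ of dimension at most $m-1$. Its family of lines then has dimension at least $2\dim Y'-1$, and a simple count shows $Y'$ is a linear space. This case can be handled by induction, or excluded by replacing $Y$ with $Y'$, so assume the lines cover $Y$. For a general point $y\in Y$, the lines of $G$ through $y$ then form a family $G^y$ with
\[
\dim G^y\geq (2m-2)-m=m-2 .
\]
These lines lie in the embedded tangent space $T_yY\cong\PP^m$ and sweep out a cone $C_y\subset Y\cap T_yY$ with vertex $y$ and $\dim C_y\geq m-1$. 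Let $\Sigma_y\subset\PP(T_yY)\cong\PP^{m-1}$ be the corresponding variety of directions.

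\textbf{Case analysis.}
\begin{itemize}
\item \emph{$\dim G^y=m-1$.} Then $C_y=Y$, so $Y$ is a cone with vertex at a general point. A variety that is a cone with vertex at each of its general points is linear. This gives case (1).
\item \emph{$\dim G^y=m-2$.} Then $\Sigma_y$ is a hypersurface in $\PP^{m-1}$ and $C_y$ is a divisor of $Y$.
 \begin{itemize}
 \item \emph{Some component of $\Sigma_y$ is a hyperplane.} Then through a general point of $Y$ passes an $(m-1)$-plane contained in $Y$. There are only finitely many such planes through a general point, since they are divisors. A dimension count on the incidence correspondence shows these planes form a one-parameter family. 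This gives case (3).
 \item \emph{Otherwise.} Every direction in $\Sigma_y$ is annihilated by the second fundamental form $\mathrm{II}_y$, which is a linear system of quadrics on $\PP^{m-1}$. A hypersurface of degree $\geq 2$ lies in the base locus of this system. Hence the system must consist of a single quadric $Q_y$, and $\Sigma_y=V(Q_y)$ as sets.
 \end{itemize}
\end{itemize}

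\textbf{Reduction to a hypersurface (expected main obstacle).} In the last subcase, Y has a one-dimensional second fundamental form at a general point. I would then show that $Y$ spans only a $\PP^{m+1}$: the osculating spaces stabilize, because the third fundamental form vanishes on the directions in $\Sigma_y$, and these directions span. This stabilization is the step I expect to be the main obstacle. Once $Y\subset\PP^{m+1}$ is a hypersurface, apply Proposition~\ref{degree-lines-through-point} with $n=m+1$ and $r=m-2$. This gives $\deg\Sigma_y\leq 2$, with equality exactly when $\Sigma_y=V(f_1,f_2)$ is a complete intersection.

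\textbf{Conclusion: $Y$ is a quadric.} Equality means that every line through $y$ in the quadric cone $V(f_1,f_2)$ lies in $Y$, so $f_3,\dots,f_d$ vanish on $V(f_1,f_2)$ for general $y$. To get $\deg Y=2$, I would compare the cones $C_y$ and $C_{y'}$ for two general points: each is the full quadric cone $Y\cap T_yY$ counted with its reduced structure. Containing such cones through every general point forces $Y$ to coincide with a quadric, for instance by intersecting $Y$ with a general plane through $y$ and checking that the residual curve consists of lines through $y$. This gives case (2).
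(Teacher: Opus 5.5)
\textbf{The argument is incomplete.} The paper does not prove this statement; it quotes Segre's classification from \cite[Theorem~1]{Rogora}. Your outline follows the classical route, and its first part is sound: the dimension count, the base locus of $|\mathrm{II}_y|$, and the hyperplane-component case leading to a scroll. There are two small slips. If the lines do not cover $Y$, they sweep out $Y'$ with $\dim F(Y')\le 2\dim Y'-2\le 2m-4$, so that case is impossible rather than ``linear''. And if $|\mathrm{II}_y|$ is empty you are in case (1). The real problem is that the two steps carrying the quadric case are not established.

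\textbf{Gap 1: stabilization of osculating spaces.} Your reason for this step is insufficient. Knowing that $\mathrm{III}_y(v,v,v)=0$ on the cone over $\Sigma_y=V(Q_y)$, and that these directions span, only gives $\mathrm{III}_y=Q_y\cdot\lambda$ with $\lambda$ linear. Spanning kills linear forms, not cubics. What is missing is the prolongation property $|\mathrm{III}_y|\subset|\mathrm{II}_y|^{(1)}$, which says that each $\partial_w\mathrm{III}_y=(\partial_wQ_y)\lambda+Q_y\lambda(w)$ lies in $\langle Q_y\rangle$. This forces $(\partial_wQ_y)\lambda\in\langle Q_y\rangle$, hence $\lambda=0$, because $Q_y$ is irreducible. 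Only then does $T^{(3)}_y=T^{(2)}_y$ at general points give $Y\subset\PP^{m+1}$.

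\textbf{Gap 2: showing $\deg Y=2$.} This step, which is the actual content of the quadric case, has no argument behind it. The condition $f_3,\dots,f_d\in(f_1,f_2)$ only says $Y\cap T_yY=C_y\cup R_y$, where $R_y$ is a hypersurface of degree $d-2$ in $T_yY$ not passing through $y$. Calling $C_y$ ``the full quadric cone $Y\cap T_yY$'' already assumes $d=2$.

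Your plane test does not close this. A general plane through $y$ contains no line of $Y$ through $y$. For a plane $P\subset T_yY$ through $y$, the residual curve is $P\cap R_y$, which avoids $y$, so showing it ``consists of lines through $y$'' is exactly what has to be proved.

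The difficulty is genuine. Suppose $d\ge 3$ and a line $\ell$ of the family lay in the smooth locus. Then $N_{\ell/Y}\subset\cO(1)^{\oplus m}$ has rank $m-1$ and degree $m-d$, so $h^0(N_{\ell/Y})\le 2m-4<2m-3$. Hence every line of the family must meet $\operatorname{Sing}Y$. Any proof must therefore rule out singular, possibly non-normal, hypersurfaces whose singular locus meets all the lines. Your sketch does not do this; it is what Segre and Rogora supply, and in the paper the citation is what carries the statement.
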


\section{Geometric reductions}
In this section, we use the results of the previous section to
rule out some of the simpler cases in the proof of the main
theorem. By the following lemma, it is enough to consider
the case $n=5$.

\begin{lemma}\label{hyperplane-reduction}
Fix $k\geq3$ and $d \geq k-1$. Suppose that Question \ref{question-main} has an affirmative
answer for $n= k-1$.
Then it has an affirmative answer for every $n$ with 
$d\geq n\geq k-1$.
\end{lemma}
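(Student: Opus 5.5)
We argue by induction on $n$, cutting with general hyperplanes. Fix $k\geq 3$, $d\geq k-1$, and let $X\subset\PP^n$ be smooth of degree $d$ with $d\geq n> k-1$. Suppose for contradiction that some irreducible component $W$ of $V_k(X)$ has $\dim W\geq 2n-k$, i.e.\ strictly larger than the expected $2n-k-1$. We want to produce a smooth hyperplane section $X'=X\cap H\subset H\cong\PP^{n-1}$ for which $V_k(X')$ also has a component that is too large. Since $d\geq n-1\geq k-1$, this contradicts the induction hypothesis, whose base case $n=k-1$ is the assumption of the lemma.

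The key incidence is the following. A pointed line $(\ell,p)\in V_k(X)$ with $\ell\subset H$ is exactly a point of $V_k(X\cap H)$, because contact order is computed by restricting $f$ to $\ell$, and restricting first to $H$ does not change this. So $V_k(X')=\{(\ell,p)\in V_k(X):\ell\subset H\}$. The condition $\ell\subset H$ imposes $2$ conditions on lines in $\PP^n$. Consider the incidence
\[
I=\{((\ell,p),H): (\ell,p)\in W,\ \ell\subset H\}\subset W\times(\PP^n)^\vee .
\]
Its fibers over $W$ are $\PP^{n-2}$'s, so $\dim I=\dim W+n-2$. A general fiber of $I\to(\PP^n)^\vee$ therefore has dimension at least $\dim W-2\geq 2(n-1)-k$, provided the projection is dominant. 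This bound exceeds the expected dimension $2(n-1)-k-1$ of $V_k(X')$. For general $H$, Bertini makes $X'$ smooth, and the fiber $I_H$ is a union of components inside $V_k(X')$. Hence $V_k(X')$ has a component of dimension at least $2(n-1)-k$, which gives the contradiction.

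The main obstacle is dominance of $I\to(\PP^n)^\vee$, that is, ensuring a general hyperplane contains some line of $W$. If the lines of $W$ span only a proper subvariety $Z$, a general $H$ might contain none of them. I would handle this with Theorem~\ref{covering-contact}. If the lines of $W$ sweep out $\PP^n$, then $\dim W=2n-k-1$, which is impossible for our $W$. So the lines sweep a proper subvariety $Z$ with $\dim Z=r\leq n-1$. Dominance can fail only if the lines of $W$ form a degenerate family; in that case I would get a lower bound on the fibers directly. A hyperplane containing at least one line of $W$ already contains many of them, namely a family of dimension at least $\dim W-2+(\text{fiber defect})$. So whenever $I_H$ is nonempty it has dimension at least $\dim W-2$, because every component of a fiber of $I\to(\PP^n)^\vee$ has dimension at least $\dim I-\dim(\text{image})\geq \dim W-2$.

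It remains to choose $H$ in the image that is still transverse to $X$. The image is irreducible of dimension at least $\dim W+n-2-\dim I_H$. If needed, I would use a dimension count showing it has dimension at least $1$ and is not contained in the dual variety $X^\vee$. The dual of a smooth hypersurface of degree $\geq 2$ is a hypersurface, so this holds unless the image lies in $X^\vee$. Ruling out that case, where every hyperplane containing a line of $W$ is tangent to $X$, is the delicate point. For it I would use Proposition~\ref{Ysingularities} or a tangency count along the lines.
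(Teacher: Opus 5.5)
\textbf{There is a genuine gap: you never produce a hyperplane $H$ containing lines of $W$ with $X\cap H$ smooth.}

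Your incidence set-up is sound. Your fallback observation is also correct: every nonempty fiber of $I\to(\PP^n)^\vee$ has dimension at least $\dim W-2$, so you would be done once you exhibit one hyperplane $H$ in the image of $I$ with $X\cap H$ smooth. But that step carries all the content, and you do not carry it out.

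The appeal to Theorem~\ref{covering-contact} does not help. Whether the lines of $W$ fill $\PP^n$ has nothing to do with whether a general hyperplane contains one of them: the lines of a fixed $2$-plane sweep out only that plane, yet a general hyperplane contains one of them. Your final reduction, that the image of $I$ is not contained in $X^\vee$, is left as ``the delicate point.'' Proposition~\ref{Ysingularities} does not address it. That proposition concerns lines in the smooth locus of a subvariety $Z$ having contact order $\dim Z+2$ with $X$, and says nothing about tangency of hyperplanes containing lines of $W$.

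\textbf{The missing idea is that the only obstruction to dominance is concurrency.} Let $F$ be an irreducible family of lines with $\dim F\ge 2$ (true here, since $\dim F\ge\dim W-1\ge n-1$), not all passing through one point. Then a general hyperplane $H$ contains a nonempty codimension-$2$ subfamily $F_H$. This is \cite[Lemma 4.4]{beheshti-crelle}, which is what the paper uses.

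One way to see it is as follows. For general $[\ell]\in F$ and general $H\supset\ell$, codimension $2$ amounts to surjectivity of $T_{[\ell]}F\to H^0(N_{H/\PP^n}|_\ell)\cong H^0(\cO_\ell(1))$. If this fails for every $H$, a linear-algebra argument on spaces of matrices of rank at most $1$ shows that all first-order deformations of $\ell$ fix a point $p_\ell\in\ell$. Then $\ell\mapsto p_\ell$ must be constant, since otherwise the lines would be tangent lines to a curve and form only a $1$-dimensional family.

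Concurrency is ruled out by dimension. If all lines of $F$ passed through one point, then $\dim W\le n-1$: lines through a point form a $\PP^{n-1}$, and those lying in $X$ form a family of dimension at most $n-3$. On the other hand $\dim W\ge 2n-k\ge n$, because $n\ge k$.

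Once a general hyperplane contains lines of $F$, Bertini makes $X\cap H$ smooth for that general $H$. The transversality problem you flag then never arises, and your dimension count finishes the induction.
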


\begin{proof}
We prove the statement by induction on $n$. When $n=k-1$, by our hypothesis, $V_k$ has the expected dimension. Suppose that $n>k-1$, and let
$X\subset\PP^n$ be a smooth hypersurface of degree $d\geq n$.
Assume that $W\subset V_k(X)$ is an irreducible component
with $
\dim W>2n-k-1.
$
Let $F$ be its image in the Grassmannian of lines.

Since 
$\dim W>2n-k-1 \geq n-1,$ the lines parametrized by $F$ do not all pass through the same point. 
So by \cite[Lemma 4.4]{beheshti-crelle}, for a general hyperplane $H\subset\PP^n$, the
locus
\[
F_H=\{ [\ell] \in F:\ell\subset H\}
\]
is nonempty and has codimension $2$ in $F$. Hence if 
$W_H=\{(\ell,p)\in W : \ell \subset H\}$, then
$
W_H\subset V_k(X\cap H)$, and 
$$
\dim W_H=\dim W-2>2(n-1)-k-1,
$$
so $\dim V_k(X\cap H)$ is larger than expected. Since $H$ is general, $X\cap H$ is non-singular, contradicting the induction hypothesis.
\end{proof}

Applying the lemma with $k=6$, we may restrict to $n=5$.
If $d=5$, every line having contact of order at least $6$
with $X$ is contained in $X$. Since $\dim F(X)=2$ by \cite{beheshti-crelle},
we have $\dim V_6(X) =3$, as required.
We may therefore assume that $d\geq6$.

Suppose, to the contrary, that $\dim V_6(X)\geq4$.
Let $W$ be an irreducible component of $V_6(X)$ of dimension
greater than expected, let $F$ be its image in the Grassmannian
of lines, and let $Z\subset\PP^5$ be the subvariety swept out
by these lines.

Since $\deg X\geq5$, $X$ is not swept out by lines.
We claim that $\dim F(X)\leq2$. Suppose that $G$ is an
irreducible component of $F(X)$ with $\dim G\geq3$.
Since a surface contains at most a two-dimensional family
of lines, the lines parametrized by $G$ sweep out a divisor
$Y\subset X$. By Proposition~\ref{prop:Segre}, $Y$ is either a linear
space, a quadric, or a scroll swept out by a one-parameter
family of $2$-planes. The first two possibilities are excluded
because every nonzero effective divisor on $X$ has degree
at least $\deg X$ by the Lefschetz theorem. 
The scroll case is also impossible, since a smooth
hypersurface of degree $>2$ in $\PP^5$
contains only finitely many $2$-planes
\cite{Starr2006}.

The locus of pointed lines contained in $X$ therefore has
dimension at most $3$, so a general line parametrized by
$W$ is not contained in $X$. Therefore, 
\[
\dim F=\dim W\geq4,\qquad Z\not\subset X.
\]
By \cite[Theorem 2.8]{PRST}, we also have $Z\neq\PP^5$.
Thus $\dim Z$ is either $3$ or $4$.

We will rule out the following cases:
\begin{enumerate}
\item $\dim Z=3$;
\item $\dim Z=4$ and $F$ is contained in an irreducible
component of $F(Z)$ of dimension at least $5$.
\end{enumerate}

\medskip
\noindent\textbf{Case 1.}
Suppose first that $\dim Z=3$. Since $Z$ contains a
four-dimensional family of lines, it must be a $3$-plane,
and $F$ is its entire family of lines. Set
\[
S=X\cap Z.
\]
The singular locus of $S$ has dimension at most $1$: under
the finite Gauss map of $X$, it maps into the pencil of
hyperplanes containing $Z$. The lines in $Z$ meeting
$\operatorname{Sing}S$ therefore form a family of dimension
at most $3$. Hence a general line of $F$ avoids
$\operatorname{Sing}S$, contradicting
Proposition~\ref{Ysingularities}.

We may therefore assume that $Z$ is a hypersurface in $\PP^5$.

\medskip
\noindent\textbf{Case 2.}  Suppose now that $\dim Z =4$ and $F$ is contained in an irreducible component of $F(Z)$ of dimension $\geq 5$. Since $\dim F(Z) \geq 5$, by Proposition \ref{prop:Segre}, $Z$ is either a linear space, a quadric, or a scroll. We get a contradiction in each case:
\begin{itemize}
\item[(1)] $Z$ is a linear subspace. In this case, $Z\simeq\PP^4$. This possibility is ruled out by Proposition \ref{Ysingularities} since  the intersection $Z\cap X$ has only finitely many singular points, and through every such point there is a $3$-dimensional family of lines in $Z$.

\item[(2)] $Z$ is an irreducible quadric hypersurface. The singular locus of $Z$ is either empty, a point, a line, or a plane. If the singular locus of $Z$ is empty or a point, then the family of lines on $Z$ meeting the singular locus of $Z$ has dimension at most $3$. Moreover, $Z \cap X$ has at most finitely many singular points (see for example \cite[Lemma 2.1]{BR21}).
Therefore, not every line parametrized by $F$ can meet the singular locus of $Z$ or the singular locus of $Z \cap X$. Proposition \ref{Ysingularities} therefore gives a contradiction.

If the singular locus of $Z$ is a line or a plane, then $Z$ contains a $1$-dimensional family of $3$-planes, i.e. $Z$ is a scroll. This is ruled out in the next case.

\item[(3)]  Suppose that $Z$ is a scroll ruled by a one-parameter family
of $3$-planes. Let $G$ be an irreducible component of $F(Z)$
of dimension at least $5$ containing $F$. Then by \cite[Proposition 3.7]{PRST}, there is a ruling
$\{\Lambda_t\simeq\PP^3\}_{t\in T}$ of $Z$ such that $G$
parametrizes the lines contained in its members. In particular,
every line parametrized by $F$ lies in a ruling $3$-plane.

For general $t$, a component $F_t$ of the family of lines of $F$
in $\Lambda_t$ has dimension at least $3$ and sweeps out
$\Lambda_t$. By Proposition~\ref{Ysingularities}, a general
marked contact point of this family lies in
$\operatorname{Sing}(X\cap\Lambda_t)$.
This singular locus has dimension at most $1$, so the only possibility is that $\dim \operatorname{Sing} (X\cap \Lambda_t)=1$, and there is a $1$-dimensional irreducible component $C_t$ of the singular locus such that for every 
point $p\in C_t$, every line
in $\Lambda_t$ through $p$ has contact order at least six with $X$ at $p$.
The next lemma shows this is not possible.
\end{itemize}

\begin{lemma}\label{prop:rigidity-triple-contact}
Let $X=V(f)\subset\mathbb P^n$ be a smooth hypersurface
of degree $d>2$. There are at most finitely many codimension-$2$ linear
subspaces $\Lambda\subset\mathbb P^n$ admitting a curve $C$ in $\Lambda \cap X$ such that $f|_{\Lambda}$ has multiplicity $\geq 3$ at every point of $C$.
\end{lemma}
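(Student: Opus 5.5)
The plan is to turn the triple-contact condition into a statement about the Gauss map $\gamma\colon X\to(\PP^n)^\vee$, $p\mapsto T_pX$, which is finite because $X$ is smooth of degree $d>2$.

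\emph{Step 1: the tangent hyperplanes along $C$ form the pencil of $\Lambda$.} Fix $\Lambda$ and $C$ as in the statement. Multiplicity $\geq2$ of $f|_\Lambda$ at $p\in C$ means $df_p$ vanishes on the affine cone $\hat\Lambda$, so $\Lambda\subset T_pX$. The hyperplanes containing $\Lambda$ form a pencil, i.e.\ a line $\Lambda^\vee\subset(\PP^n)^\vee$. Hence $\gamma(C)\subset\Lambda^\vee$. Since $\gamma$ is finite, $\gamma(C)$ is a curve, so $\gamma(C)=\Lambda^\vee$. In particular $\Lambda=\bigcap_{p\in C}T_pX$ is recovered from $C$, and $\Lambda^\vee$ is a line on the dual variety $X^\vee$.

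\emph{Step 2: the third-order condition bounds the rank of $\mathrm{II}_p$.} Multiplicity $\geq3$ means the Hessian $\mathrm{Hess}(f)(p)$ vanishes on $\hat\Lambda\times\hat\Lambda$. Since $\Lambda$ is a hyperplane inside $T_pX$, the second fundamental form $\mathrm{II}_p$ vanishes on a hyperplane of $T_pX$, so $\operatorname{rank}\mathrm{II}_p\leq2$. A quadric of rank $2$ vanishes identically only on its two component hyperplanes, and a quadric of rank $1$ only on its double hyperplane. So for $p$ with $\mathrm{II}_p\neq0$ there are at most two possible $\Lambda$ through $p$.

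The locus where $\mathrm{II}_p=0$ is finite: by Proposition~\ref{secondfundamentalform} with $i=n-1$ it has codimension at least $n-1$ in $X$. Hence $C\not\subset R_{n-1}$, and a general point of $C$ lies on only finitely many admissible $\Lambda$.

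\emph{Step 3: reduce to a one-parameter family.} Suppose there are infinitely many such $\Lambda$. Take a one-dimensional family $\{(\Lambda_s,C_s)\}_{s\in B}$. By Step 2 the curves $C_s$ sweep out a surface $S\subset R_{n-3}$, with a general point of $S$ on finitely many $C_s$. Then $\gamma(S)$ is a surface in $X^\vee$ ruled by the lines $\Lambda_s^\vee$, and along each ruling $\gamma$ has a degenerate differential.

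Concretely, $\mathrm{Hess}(f)(p)$ maps $\hat\Lambda_s$ into $\hat\Lambda_s^{\perp}$, which is $2$-dimensional. So $d\gamma_p$ sends the $(n-2)$-dimensional space $T_p\Lambda_s$ into the $1$-dimensional tangent space of the line $\Lambda_s^\vee$. Thus $\ker d\gamma_p$ has dimension at least $n-3$ on $\Lambda_s$. For a smooth hypersurface the differential of the Gauss map is identified with $\mathrm{II}_p$, which is nondegenerate outside $R_1$. Applying Proposition~\ref{secondfundamentalform} to $S$ shows only that $\dim S\leq2$. This is consistent, so a finer argument is needed.

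\emph{Step 4 (the main obstacle): rigidity of $\Lambda_s$.} To get a contradiction I would differentiate the condition ``$\mathrm{II}_p$ vanishes on the fixed hyperplane $\Lambda_s\subset T_pX$'' along $C_s$ and across $s$. The aim is to show that $T_pS$ must lie in the kernel of $\mathrm{II}_p$, or else that $\gamma|_S$ has a positive-dimensional fibre.

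If $T_pS$ lies in the kernel of $\mathrm{II}_p$, then $\gamma$ contracts $S$ along some direction. This contradicts the finiteness of $\gamma$ (equivalently, Zak's theorem that a tangent hyperplane to a smooth $X$ is tangent only along a finite set). A concrete route is to write $f$ in local coordinates adapted to $p\in C_s$, with $\Lambda_s=V(x_{n-1},x_n)$ and $T_pX=V(x_n)$. Expanding, the conditions say every cubic-and-lower term of $f|_{\Lambda_s}$ vanishes along $C_s$. Then one checks that moving $s$ forces the Hessian of $f$ at a general point of $S$ to acquire a kernel vector inside $T_pX$, i.e.\ $\operatorname{rank}\mathrm{II}_p\leq n-2$ with that kernel tangent to $S$. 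This would make $\gamma|_S$ non-finite.

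I expect this local computation to be the hard part. If it fails in general, the fallback is to bound $\dim S$ more sharply using the stratification in Proposition~\ref{secondfundamentalform} for $n=5$, which is the only case needed in the paper.
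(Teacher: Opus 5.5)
Your Steps 1--3 are correct, but they are only preparation. As you observe, Step 3 gives no contradiction. Step 4, which would have to carry the whole content of the lemma, is a hope rather than an argument, and its target cannot serve as an intermediate step. At a general point $p\in C_s$, the map $\gamma|_{C_s}\colon C_s\to\Lambda_s^\vee$ is finite onto a line by your Step 1, so in characteristic $0$ its differential is nonzero. Thus $T_pC_s\not\subset\ker d\gamma_p=\ker\mathrm{II}_p$, and $T_pS\subset\ker\mathrm{II}_p$ can never be derived. The weaker alternative, a kernel vector of $d\gamma_p$ inside $T_pS$ at a general point, means $\gamma(S)$ is a curve. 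Since $\gamma(S)\supset\Lambda_s^\vee$ for every $s$, this says exactly that $\Lambda_s$ does not move, which is the lemma itself. The information you extract from $\mathrm{II}_p$ (rank $\leq 2$, vanishing on $\Lambda_s$) holds for each member separately. It cannot see how $\Lambda_s$ varies with $s$, and that variation is what must be controlled.

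The missing idea is to differentiate in the parameter of the family, using the section of $N_{\Lambda/\PP^n}$ that the family defines. Take $\Lambda=V(x_0,x_1)$, $G_a=\partial f/\partial x_a|_\Lambda$, and let $\alpha=(\ell_0,\ell_1)\in H^0(\Lambda,\cO_\Lambda(1))^{\oplus 2}$ be the first-order deformation of $\Lambda$; it is nonzero at a general member of a nonconstant family. The derivative of $f|_{\Lambda_s}$ is $\sigma=\ell_0G_0+\ell_1G_1$, and the contact condition along the moving curve forces $\sigma\in I_C^2$. Indeed, after a base change making the $C_s$ flat, let $J$ be the ideal of the deformed curve; then $g+\epsilon\sigma\in J^3\subset I^3R_\epsilon+\epsilon I^2R_\epsilon$, so the motion of $C$ is invisible at this order. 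Work on the normalization $\widetilde C$, with $L$ the pullback of $\cO(1)$. The map $(G_0,G_1)\colon L^{\oplus 2}\to L^{\otimes d}$ is surjective by smoothness of $X$ and your Step 1, and its kernel $K$ has degree $(2-d)\deg L<0$. Since $\sigma|_C=0$, $\alpha|_C$ is a section of $K$, so $\ell_0|_C=\ell_1|_C=0$. Geometrically, the tangent hyperplanes of the scroll $\bigcup_s\Lambda_s$ along $C$ are given by the linear forms $\ell_a$, while those of $X$ are given by $\gamma|_C$, which has degree $(d-1)\deg L$. Next, the partials $\partial\sigma/\partial x_i$, $i\geq 2$, vanish on $C$ and reduce to $c_{0i}G_0+c_{1i}G_1=0$, where the constants $c_{ai}$ are the coefficients of $\ell_a$. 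This is a section of $K\otimes L^{-1}$, which has negative degree; equivalently, a nonzero one would make $\gamma$ constant on $C$. Hence $\alpha=0$, a contradiction. This argument needs neither the surface $S$ nor Proposition~\ref{secondfundamentalform}.
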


\begin{proof}

Suppose to the contrary that there is a nonconstant
one-parameter family of such $(n-2)$-planes. After a finite
base change and shrinking the base, we may assume that the
family is parametrized by a smooth curve $T$ and that there
is a flat family of integral projective curves
$\mathcal C\to T$ such that $C_t\subset\Lambda_t$ and
$f|_{\Lambda_t}$ vanishes to order at least three at a
general point of $C_t$.

Fix a general $t\in T$, and write
$\Lambda=\Lambda_t$ and $C=C_t$. Let
\[
\nu:\widetilde C\longrightarrow C
\]
be the normalization, and let
$\varphi:\widetilde C\to\mathbb P^n$ be the composition of $\nu$
with the inclusion of $C$. Set
$L=\varphi^*\mathcal O_{\mathbb P^n}(1)$.

Choose coordinates so that $\Lambda=V(x_0,x_1)$, and set
\[
G_a=\left.\frac{\partial f}{\partial x_a}\right|_\Lambda,
\qquad a=0,1.
\]
Since $X$ is smooth along $C$ and $C$ is in the singular locus of $X \cap \Lambda$, the map
\[
\beta:
\varphi^*\mathcal O_{\mathbb P^n}(1)^{\oplus2}
\longrightarrow \varphi^*\mathcal O_{\mathbb P^n}(d),
\]
given by the pullbacks of $G_0$ and $G_1$, is surjective.
Its kernel $K$ is a line bundle with
$
\deg K=(2-d)\deg L<0,
$
hence $H^0(\widetilde C,K)=0$.

The family $\Lambda_t$ determines a nonzero section
\[
\alpha=(\ell_0,\ell_1)
\in H^0(\Lambda,N_{\Lambda/\mathbb P^n})
\simeq H^0(\Lambda,\mathcal O_\Lambda(1))^{\oplus2}.
\]
Set
\[
\sigma=\ell_0G_0+\ell_1G_1
\in H^0(\Lambda,\mathcal O_\Lambda(d)).
\]
We claim that $\sigma$ vanishes to order at least two
at a general point of $C$. Let $R$ be the local
coordinate ring of $\Lambda$ and $I\subset R$ the ideal
of $C$ near a general point
where $\mathcal C\to T$ is smooth. Set
\[
R_\epsilon=R[\epsilon]/(\epsilon^2),
\]
and let $J\subset R_\epsilon$ be the ideal of the
first-order deformation of $C$, so that
$J\bmod\epsilon=I$. Trivialize
$\mathcal O_\Lambda(d)$ locally, let $g\in R$ represent
$f|_\Lambda$, and let $g_\epsilon\in R_\epsilon$ be the
local equation obtained by restricting $f$ to the
first-order deformation $\Lambda_\epsilon$, so 
\[
g_\epsilon=g+\epsilon\sigma.
\]
The contact hypothesis gives $g_\epsilon\in J^3$.
Since $J\subset IR_\epsilon+(\epsilon)$, we have
\[
J^3\subset
(IR_\epsilon+(\epsilon))^3
=I^3R_\epsilon+\epsilon I^2R_\epsilon.
\]
Comparing coefficients of $\epsilon$ gives $\sigma\in I^2$,
proving the claim.

Therefore, $\sigma$ and all its first partial
derivatives on $\Lambda$ vanish at general points of $C$,
and hence on all of $C$. In particular,
$
\beta\bigl(\nu^*(\alpha|_C)\bigr)
=0.
$
Thus $\nu^*(\alpha|_C)$ is a global section of $K$, so it 
must be zero since $\deg K <0$. It follows that
\[
\ell_0|_C=\ell_1|_C=0.
\]

We now apply the same negative-degree argument to the
coefficients of $\ell_0$ and $\ell_1$. Taking derivative, we get that for every $2\leq i\leq n$,
\[
0=
\left.\frac{\partial\sigma}{\partial x_i}\right|_C
=
\frac{\partial\ell_0}{\partial x_i}\,G_0|_C
+
\frac{\partial\ell_1}{\partial x_i}\,G_1|_C.
\]
Tensoring the sequence defining $K$ by $L^{-1}$ gives
\[
0\longrightarrow K\otimes L^{-1}
\longrightarrow \mathcal O_{\widetilde C}^{\oplus2}
\xrightarrow{\ \beta\otimes L^{-1}\ }
L^{\otimes(d-1)}
\longrightarrow0.
\]
The above identity shows that 
$
(
\frac{\partial\ell_0}{\partial x_i},
\frac{\partial\ell_1}{\partial x_i}
)
$
defines a global section of $K\otimes L^{-1}$.
But
\[
\deg(K\otimes L^{-1})=(1-d)\deg L<0,
\]
so this section is zero. Therefore all coefficients of
the linear forms $\ell_0$ and $\ell_1$ are zero.
This contradicts $\alpha\neq0$.

\end{proof}

Summarizing, we obtain the following proposition.

\begin{proposition}[Reduction to the remaining case]
\label{reduction}
Let $W\subset V_6(X)$ be an irreducible component of dimension
greater than expected, let $F$ be its image in the Grassmannian
of lines, and let $Z\subset\PP^5$ be the subvariety swept out
by these lines. Then $\dim Z=4$, and $F$ is a four-dimensional
irreducible component of $F(Z)$.

Let $p\in Z$ be a general point, and let $\Sigma$ be an
irreducible component of
\[
F^p=\{\ell\in F:p\in\ell\}.
\]
Then $\Sigma$ is a curve in
$\PP(T_pZ)\simeq\PP^3$, and
Proposition~\ref{degree-lines-through-point} gives
\[
\deg\Sigma\leq6.
\]
If equality holds, then $\Sigma$ is a complete intersection
of type $(2,3)$ in $\PP(T_pZ)$.
\end{proposition}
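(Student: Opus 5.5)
The plan is to assemble the case analysis carried out above and then apply Proposition~\ref{degree-lines-through-point} to $Z$.

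\textbf{Step 1: $\dim Z=4$.} We have already shown that a general line of $W$ is not contained in $X$. Hence $\dim F=\dim W\geq4$ and $Z\not\subset X$. By Theorem~\ref{covering-contact}, $Z\neq\PP^5$. Case~1 excludes $\dim Z=3$, so $\dim Z=4$.

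\textbf{Step 2: $F$ is a four-dimensional component of $F(Z)$.} Let $G$ be an irreducible component of $F(Z)$ containing $F$. Case~2 shows $\dim G\leq4$. Since $\dim F\geq4$ and $F\subset G$, we get $\dim F=\dim G=4$ and $F=G$. For this to hold, $F$ must be closed. I take it to be the closure of the image of $W$; this is harmless, since the closure of the image of the irreducible $W$ is irreducible. So $F$ is an irreducible component of $F(Z)$ of dimension $4$, and in particular $\dim W=4$.

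\textbf{Step 3: structure of $\Sigma$.} The lines of $F$ sweep out $Z$, so the incidence correspondence $\{(\ell,p):p\in\ell\}$ over $F$ has dimension $5$. Its projection to $Z$ is dominant onto a $4$-fold, so for general $p\in Z$ every component of $F^p$ has dimension $1$. We may take $p$ to be a smooth point of $Z$, and every line through $p$ contained in $Z$ lies in the embedded tangent space. Hence $\Sigma\subset\PP(T_pZ)\simeq\PP^3$.

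\textbf{Step 4: the degree bound.} Since $F$ is a component of $F(Z)$ sweeping out $Z$, Proposition~\ref{degree-lines-through-point} applies to the reduced irreducible hypersurface $Z\subset\PP^5$ with $n=5$, $r=1$, and $m=5-1-1=3$. It gives $\deg\Sigma\leq3!=6$, and in case of equality $\Sigma$ is a complete intersection of type $(1,2,3)$ in $\PP^4$. The linear form in that complete intersection is $f_1$ for $Z$ at $p$, and it cuts out $\PP(T_pZ)$. Thus $\Sigma$ is a complete intersection of type $(2,3)$ in $\PP(T_pZ)\simeq\PP^3$.

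The step needing the most care is the identification in Step~4, namely that the hyperplane $V(f_1)$ equals $\PP(T_pZ)$ for general $p$. This holds because $p$ is a smooth point of $Z$, so $f_1\neq0$ is the tangent hyperplane. The rest is bookkeeping of the earlier cases.
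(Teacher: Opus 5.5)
Your proposal is correct and follows the paper's own route: the proposition is obtained by summarizing the two cases. In order, $Z\neq\PP^5$ by Theorem~\ref{covering-contact}, Case~1 excludes $\dim Z=3$, and Case~2 forces the component of $F(Z)$ containing $F$ to be $F$ itself. Then Proposition~\ref{degree-lines-through-point} is applied to $Z$ with $r=1$ and $m=3$, identifying $V(f_1)$ with $\PP(T_pZ)$. The one step you leave implicit is $\dim Z\geq 3$. It holds because $\dim F\geq 4$, while a surface carries at most a two-dimensional family of lines.
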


\section{Proof of the main theorem}

In this section we prove the main theorem. We first treat the case $d=6$ using the main result of \cite{beheshti-crelle}. 
\begin{lemma}\label{d=6}
Theorem \ref{main} holds for $d=6$.
\end{lemma}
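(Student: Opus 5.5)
By Lemma~\ref{hyperplane-reduction} applied with $k=6$ and $d=6$, it suffices to treat $n=5$. So let $X=V(f)\subset\PP^5$ be smooth of degree $6$; the expected dimension of $V_6(X)$ is $2\cdot5-6-1=3$. The plan is to realize the lines with contact of order at least six as projections of lines on an auxiliary smooth sextic in $\PP^6$, namely the cyclic cover
\[
Y=V\bigl(f(x_0,\ldots,x_5)-x_6^6\bigr)\subset\PP^6 .
\]
Then we apply the known case $d=n=6$ of the Debarre--de Jong conjecture from \cite{beheshti-crelle}, which gives $\dim F(Y)=2\cdot6-6-3=3$.

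First, $Y$ is smooth. The partials of $f-x_6^6$ are $\partial f/\partial x_i$ for $i\le5$ and $-6x_6^5$; we are in characteristic $0$. A singular point therefore has $x_6=0$ and all $\partial f/\partial x_i=0$, so it would give a singular point of $X$. The vertex $q=(0:\cdots:0:1)$ does not lie on $Y$. Hence projection from $q$ sends every line $\tilde\ell\subset Y$ isomorphically onto a line $\ell\subset\PP^5$. On $\ell$ we have $f|_\ell=(x_6|_{\tilde\ell})^6$, where $x_6|_{\tilde\ell}$ is regarded as a linear form on $\ell$.

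Second, we relate $F(Y)$ and $V_6(X)$. Let $(\ell,p)\in V_6(X)$ with $\ell\not\subset X$. The binary sextic $f|_\ell$ is nonzero and has a root of multiplicity $\ge6$ at $p$. Hence $f|_\ell=c\,m_p^6$, where $m_p$ is a linear form on $\ell$ vanishing at $p$ and $c\neq0$, and $p$ is the unique contact point. Choosing $x_6|_\ell=\zeta c^{1/6}m_p$ with $\zeta^6=1$ produces exactly six lines of $Y$ over $\ell$. This gives a quasi-finite correspondence between the locus of $V_6(X)$ with $\ell\not\subset X$ and an open subset of $F(Y)$. Therefore that locus has dimension at most $\dim F(Y)=3$.

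Third, we handle pointed lines contained in $X$. In Section~3 it was shown that $\dim F(X)\le2$, so the pointed lines lying in $X$ form a family of dimension at most $3$. Combining the two loci gives $\dim V_6(X)\le3$. The reverse inequality holds for every component, so $V_6(X)$ has the expected dimension.

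The only step needing care is the second one. One must check that the correspondence is really finite-to-one and defined on all of $V_6(X)$ minus the lines in $X$; the key points are that the contact point is determined by $\ell$ and that there are exactly six lifts. The rest is formal once Debarre--de Jong for $n=d=6$ is invoked.
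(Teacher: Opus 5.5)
Your proposal is correct and follows the paper's route: you take the sextic cyclic cover $x_6^6=f$ in $\PP^6$, project from the vertex, observe that each line with contact six has six lifts, and invoke $\dim F(\tilde X)=3$ from \cite{beheshti-crelle}. Your separate bound for pointed lines contained in $X$, using $\dim F(X)\le 2$ from Section 3, makes explicit a step that the paper's one-line conclusion leaves implicit, since $F(X)\subset F(\tilde X)$ alone would only give dimension at most $4$ for those pointed lines.
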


\begin{proof}
Let $X=V(f)$, and let $\tilde{X} \subset \PP^6$ be the hypersurface defined by $x_6^6=f$. Then $\tilde{X}$ is non-singular. Projection from the point $(0:\dots:0:1)$ gives a morphism $\pi: \tilde{X} \to \PP^5$ branched along $X$. If a  line $\ell$ in $\PP^5$ has intersection multiplicity $6$ at a point with $X$, then $\pi^{-1}(\ell)$ is the union of $6$ lines in $\tilde{X}$. By \cite{beheshti-crelle} $\dim F(\tilde{X}) = 3$, so $\dim V_6(X)=3$.
\end{proof}

From now on we assume $d\geq 7$.  By
Proposition \ref{reduction}, it remains to rule out the
following situation: $X\subset\PP^5$ is a smooth hypersurface
of degree $d\geq7$, and $W\subset V_6(X)$ is an irreducible
$4$-dimensional family whose lines sweep out a hypersurface
$Z\subset\PP^5$. We also assume that the image
of $W$ in the Grassmannian of lines is an irreducible component
$G$ of $F(Z)$.

Since $X$ is not covered by lines, $Z\neq X$. Let $q$ be a general point of $Z$. Then $q \not\in X$ and there is a
one-dimensional family of lines through $q$  having contact order at least
$6$ with $X$. Choose an irreducible component of this family,
and let $S_0$ be the surface swept out by its lines.

The surface $S_0$ is a cone with vertex $q$ over an
irreducible curve $B_0$, obtained by intersecting $S_0$ with
a general hyperplane not containing $q$. Let $\nu:B\to B_0$ be the normalization, and set
\[
\mathcal N=\nu^*\mathcal O_{B_0}(-1).
\]
Normalizing the cone and resolving its vertex gives
the smooth ruled surface
\[
\pi:S=\mathbb P_B(\mathcal O_B\oplus\mathcal N)
\longrightarrow B
\]
and a morphism
\[
\varphi:S\longrightarrow\mathbb P^5.
\]

The morphism $\varphi$ is birational onto $S_0$ and maps each ruling fiber
isomorphically to a line through $q$ having contact order at
least $6$ with $X$ at a marked contact point given by $W$. The contact incidence gives
a (not necessarily smooth) multisection $D$ of $\pi$. 

We next discuss deformations of morphisms from smooth surfaces
to $\PP^n$ that preserve a prescribed order of contact with a hypersurface $X$ along a divisor. We will apply
this discussion to the restriction of $\varphi$ to the smooth surface $S\setminus \operatorname{Sing}(S)$ and the non-singular
divisor $D \setminus \operatorname{Sing}(D)$ obtained above. 
The relation between logarithmic tangent sheaves and
deformations preserving the inverse image of a divisor
is discussed in \cite[\S~2.3]{PatelRiedlTseng2024}.

\subsection{Deformations keeping contact order along a divisor}

Let
        $\varphi:S\longrightarrow \PP^5$
be a generically injective morphism from a smooth surface with $\varphi(S) \not\subset X$. Suppose that $r \geq 1$ and $D\subset S$ is a
a smooth effective integral Cartier divisor such that $\varphi^*X$ has order of vanishing $r$ at a general point of $D$.
Let
$\mathcal L=\varphi^*\cO_{\PP^5}(1).$
Consider the Euler sequence 
$$0 \to \cO_{\PP^5} \to \cO_{\PP^5}(1)^{\oplus 6} \to T_{\PP^5} \to 0.$$ 

Let $X=V(f)$. Since $X$ is smooth, there is a surjective map $\cO_{\PP^5}(1)^{\oplus 6} \to \cO_{\PP^5}(d)$ given by the partial derivatives of $f$. The composition 
$$\cO_{\PP^5} \to \cO_{\PP^5}(1)^{\oplus 6} \to \cO_{\PP^5}(d)$$
is multiplication by $d \; f$, and the composition of the pullback of these maps to $S$ is multiplication by $d \;\varphi^*f$. Since $\varphi^* f|_{rD}=0$, we get a surjection 
$$ \rho: \varphi^*T_{\PP^5} \to \mathcal L^{\otimes d}|_{rD}.$$
Denote the kernel of this map by $M'$. 
Then there is an exact sequence
\begin{equation}
0\longrightarrow M' \longrightarrow \varphi^*T_{\PP^5} 
\xrightarrow{\rho} \mathcal L^{\otimes d}|_{rD}\longrightarrow 0,
\end{equation}
and $M'$ is locally free of rank $5$.

Let $N_{\varphi}$ denote the normal sheaf of $\varphi$, namely the cokernel of the injective map $d\varphi:T_S\longrightarrow \varphi^*T_{\PP^5}$. Let $T'$ and $N'$ be the kernel and image of the map $M'\longrightarrow N_\varphi,$ respectively, so we have a short exact sequence 
$$0 \to T' \to M' \to N' \to 0.$$
Denote by $Q$ the cokernel of the map $$\rho \circ d\varphi: T_S\longrightarrow \mathcal L^{\otimes d}|_{rD}.$$
Then there is also an exact sequence
\[
0\longrightarrow N'\longrightarrow N_\varphi
\longrightarrow Q\longrightarrow0.
\]

\begin{lemma}
\label{lem:Tprime}
We have 
\[
        T'=T_S(-\log D).
\]
\end{lemma}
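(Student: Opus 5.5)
The plan is to identify $T'$ with the subsheaf of vector fields on $S$ whose image under $d\varphi$ lies in $M'$, and then compute this condition in local coordinates. Since $0\to T_S\xrightarrow{d\varphi}\varphi^*T_{\PP^5}\to N_\varphi\to0$ is exact, the kernel of the composite $M'\hookrightarrow\varphi^*T_{\PP^5}\to N_\varphi$ is exactly $M'\cap d\varphi(T_S)$. Hence
\[
T'=\{v\in T_S:\ \rho(d\varphi(v))=0\}=\ker(\rho\circ d\varphi),
\]
and it suffices to prove the equality $\ker(\rho\circ d\varphi)=T_S(-\log D)$ of subsheaves of $T_S$, which is a local question on $S$.

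First I make $\rho\circ d\varphi$ explicit. Near a point of $S$, I trivialize $\mathcal L$ and write $g=\varphi^*f$ for the local function. Take a local vector field $v$ and lift $d\varphi(v)$ through the pulled-back Euler sequence to a section $(s_0,\dots,s_5)$ of $\mathcal L^{\oplus6}$. Then $\rho(d\varphi(v))=\sum s_i\,\varphi^*(\partial f/\partial x_i)$ restricted to $rD$. The lift is determined only up to the image of $\cO_S$, and that ambiguity contributes a multiple of $d\,g$, which vanishes on $rD$. By the chain rule, choosing the lift that comes from differentiating the affine coordinates gives $\rho(d\varphi(v))=v(g)\bmod \mathcal O_S(-rD)$, up to a unit from the trivialization. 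So
\[
\ker(\rho\circ d\varphi)=\{v:\ v(g)\in I_D^r\}.
\]

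Next comes the local computation. Let $x$ be a local equation of the smooth divisor $D$, and write $g=x^r h$. Because $\varphi^*X$ has order exactly $r$ at a general point of $D$ and $D$ is integral, $h$ is not divisible by $x$. Then
\[
v(g)=r\,x^{r-1}h\,v(x)+x^r v(h).
\]
Thus $v(g)\in(x^r)$ holds if and only if $r\,h\,v(x)\in(x)$. Since $\operatorname{char}=0$, $r$ is invertible. Since $(x)$ is prime in the local ring at points of the integral divisor $D$ and $h\notin(x)$, this is equivalent to $v(x)\in(x)$, which is precisely the defining condition of $T_S(-\log D)$. Away from $D$ both sheaves equal $T_S$. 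Because both sides are subsheaves of $T_S$, these local identifications glue to give $T'=T_S(-\log D)$.

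I expect the main obstacle to be the bookkeeping in the first step. One has to check that the Euler-sequence description of $\rho$ really agrees with "differentiate $\varphi^*f$ along $v$" modulo $I_D^r$, independently of the chosen lift and of the trivialization of $\mathcal L$. A second point needing care is that $h$ may vanish at special points of $D$. The primality argument handles these points, because the condition $hv(x)\in(x)$ in the local ring at any point of $D$ still forces $v(x)\in(x)$. Finally, one should check that the order of vanishing is $r$ along all of $D$, which follows since $D$ is an integral component of the divisor $\varphi^*X$.
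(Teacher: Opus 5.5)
Your proof is correct and takes essentially the same route as the paper's. Both identify $T'$ with $\ker(\rho\circ d\varphi)$, compute $\rho(d\varphi(v))=v(z^r\alpha)\bmod z^r=r z^{r-1}\alpha\,v(z)\bmod z^r$, and use $\alpha|_D\neq0$ on the integral divisor $D$ to conclude that this vanishes exactly when $v(z)\in(z)$. The paper phrases your primality step as a factorization of $\rho\circ d\varphi$ through the normal map $T_S\to N_{D/S}$ followed by the injective multiplication by $r\alpha|_D$, and it takes for granted the Euler-sequence and chain-rule identification that you check explicitly.
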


\begin{proof}

Let $I_D$ be the ideal sheaf of $D$ in $S$.  Since
$\varphi^*X\geq rD$, we have $
\varphi^{*}(f) = s_D^r \alpha$ where $s_D$ is the canonical section of $\cO_S(D)$ and $
\alpha\in H^0(S, \mathcal L^{\otimes d}(-rD)).$
Since we  assume that the intersection multiplicity with $X$ at a general point of $D$ is exactly $r$, we have 
$\alpha|_D\neq 0.$

Consider the composition map 
\[
\delta=\rho\circ d\varphi:
T_S\longrightarrow \mathcal L^{\otimes d}|_{rD}.
\]
Working locally,
let $z=0$ be an equation for $D$ and write
\[
\varphi^*f=z^r\alpha.
\]
For a local section $v$ of $T_S$, we have
\[
\delta(v)
= v(z^r\alpha)\bmod z^r
= r z^{r-1}\alpha\,v(z)\bmod z^r.
\]
So the image
of $\delta$ lies in 
\[
\mathcal L^{\otimes d}\otimes
\frac{I_D^{r-1}}{ I_D^r}
\simeq \mathcal L^{\otimes d}(-(r-1)D)|_{D} \hookrightarrow \mathcal L^{\otimes d}|_{rD}.
\]
So $\delta$ factors as
\[
T_S
\xrightarrow{\nu}
N_{D/S}
\xrightarrow{\;r\alpha|_D\;}
\mathcal L^{\otimes d}(-(r-1)D)|_{D}
\hookrightarrow \mathcal L^{\otimes d}|_{rD},
\]
where
\[
\nu:T_S\longrightarrow N_{D/S}\simeq\cO_D(D)
\]
is the normal map associated with the divisor $D$. 

By definition, the kernel of $\nu$ is the logarithmic tangent
bundle $T_S(-\log D).$
Since $\alpha|_D$ is nonzero, multiplication by
$r\alpha|_D$ is injective.  Therefore
\[
\ker(\delta)=\ker(\nu)=T_S(-\log D),
\]
and the assertion follows.
\end{proof}

Putting everything together, we get the following commutative diagram: 

\begin{equation}\label{deformations}
\xymatrix{
& 0 \ar[d] & 0 \ar[d] & 0 \ar[d] &\\
0 \ar[r] & T'\simeq T_S(-\log D) \ar[r] \ar[d]  & T_S \ar[d] \ar[r] & N_{D/S} \ar[d]^{r\alpha} \ar[r] & 0 \\
0 \ar[r] & M' \ar[r] \ar[d] &\varphi^*T_{\PP^5} \ar[r] \ar[d] &\mathcal L^{\otimes d}|_{rD} \ar[d] \ar[r] & 0 \\
0 \ar[r] & N' \ar[r] \ar[d] &N_{\varphi} \ar[r] \ar[d]  & Q \ar[r] \ar[d] & 0 \\
& 0 & 0 & 0  
}
\end{equation}

\begin{proposition}
\label{prop:deformations}
Suppose that $D$ is smooth and integral and that the general 
multiplicity of $\varphi^*X$ along $D$ is $r$. There is a natural
injective map from the space of 
first-order deformations of $(S,D,\varphi)$ satisfying
$\varphi_\epsilon^*X\geq rD_\epsilon$ to $H^0(S,N')$.
\end{proposition}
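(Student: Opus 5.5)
We follow the standard deformation theory of maps with a fixed source: first-order deformations of the triple $(S,D,\varphi)$, modulo reparametrization, are classified by sections of $N_\varphi$. The task is to show that the contact condition forces such a section into the subsheaf $N'\subset N_\varphi$, and that the assignment is injective. Concretely, we work with a first-order deformation $(S_\epsilon,D_\epsilon,\varphi_\epsilon)$ over $k[\epsilon]/(\epsilon^2)$. Since $S$ is smooth and all our considerations are local and then glued, we may cover $S$ by affine opens $U_i$ on which $S_\epsilon|_{U_i}\simeq U_i\times\operatorname{Spec}k[\epsilon]$ and $D_\epsilon$ is cut out by $z_i+\epsilon w_i$, where $z_i$ is a local equation of $D$.

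\textbf{Step 1 (local sections of $M'$).} On each $U_i$, the trivialization writes $\varphi_\epsilon=\varphi+\epsilon v_i$ with $v_i$ a section of $\varphi^*T_{\PP^5}$; concretely, $v_i$ is a lift to $\cO(1)^{\oplus6}$ modulo the Euler section. The pullback equation becomes
\[
\varphi_\epsilon^*f=\varphi^*f+\epsilon\,\rho_0(v_i),
\]
where $\rho_0$ is the map given by the partials of $f$; this is well defined modulo $\varphi^*f$, by Euler's formula. The condition $\varphi_\epsilon^*X\geq rD_\epsilon$ says $\varphi_\epsilon^*f\in (z_i+\epsilon w_i)^r$. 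Expanding, and using $\varphi^*f=z_i^r\alpha$, we get
\[
\rho_0(v_i)\equiv r z_i^{r-1}w_i\alpha \pmod{z_i^r}.
\]
Hence $\rho(v_i)=\delta(\tilde w_i)$, where $\tilde w_i$ is any local vector field with $\tilde w_i(z_i)=w_i$; this uses the factorization of $\delta$ established in the proof of Lemma~\ref{lem:Tprime}. Consequently $v_i-d\varphi(\tilde w_i)$ is a local section of $M'$.

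\textbf{Step 2 (gluing).} On overlaps, the local data differ by the standard cocycle. The trivializations differ by a vector field $\theta_{ij}$, which changes $v_i$ by $d\varphi(\theta_{ij})$ and $w_i$ by $\theta_{ij}(z)$, up to the unit relating $z_i$ and $z_j$. Meanwhile the choice of $\tilde w_i$ is ambiguous exactly by $T_S(-\log D)=T'$. Therefore the images of $v_i-d\varphi(\tilde w_i)$ in $M'/T'=N'$ agree on overlaps, and they glue to a global section of $N'$. Isomorphic deformations give the same section, so the map is well defined and natural. Its composite to $N_\varphi$, via the diagram~\eqref{deformations}, is the usual class of the deformation of $\varphi$.

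\textbf{Step 3 (injectivity).} This is the step requiring the most care. Suppose the section of $N'$ vanishes. Then each $v_i-d\varphi(\tilde w_i)$ lies in $T'$, so after modifying the local trivializations by vector fields we can arrange $v_i=0$. Thus $\varphi_\epsilon$ is the trivial deformation of $\varphi$. The remaining data is the deformation of $D$, given by $w_i$. With $v_i=0$, the congruence in Step 1 reads $r z_i^{r-1}w_i\alpha\equiv 0\pmod{z_i^r}$. Since $\alpha|_D\neq0$ and $D$ is integral, this forces $w_i|_D=0$, so $D_\epsilon$ is also trivial. We must still check that the modifications of the trivialization are compatible with $D_\epsilon$. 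This holds because elements of $T_S(-\log D)$ preserve $D$, so the section $w|_D\in H^0(N_{D/S})$ is unaffected. Hence the deformation of $(S,D,\varphi)$ is trivial, and the map is injective. The main subtlety is verifying the cocycle bookkeeping in Step 2, namely that the ambiguity is exactly $T'$; Lemma~\ref{lem:Tprime} supplies precisely this.
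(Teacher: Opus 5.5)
Your proof is correct and follows the paper's argument: you take local sections of $M'$ on a trivializing cover, glue them modulo $T'=T_S(-\log D)$, and deduce injectivity from the injectivity of deformations of $(S,\varphi)$ into $H^0(S,N_\varphi)$. The differences are only in presentation: the paper cites local triviality of deformations of the smooth pair $(S,D)$ so that $\rho(v_i)=0$ holds directly, whereas you trivialize only $S_\epsilon$ and absorb the motion of $D$ through the correction term $d\varphi(\tilde w_i)$; you also make explicit, using $\alpha|_D\neq0$ and the integrality of $D$, that the contact condition determines $D_\epsilon$ once $(S_\epsilon,\varphi_\epsilon)$ is trivial, a point the paper leaves implicit.
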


\begin{proof}
Let $(S_\epsilon, D_\epsilon,\varphi_\epsilon)$ be such a deformation
over $k[\epsilon]/(\epsilon^2)$. Since $(S,D)$ is a smooth pair,
its deformation is locally trivial (see \cite[\S3.4.4]{Sernesi}). Choose local
trivializations on an open cover $\{U_i\}$ of $S$. On each $U_i$, the first-order deformations of $\varphi$ are given by sections
\[
v_i\in H^0(U_i,\varphi^*T_{\PP^5}).
\]
The contact condition gives $\rho(v_i)=0$, so
$v_i\in H^0(U_i,M')$. On overlaps, the infinitesimal transition maps are
given by 
\[
\theta_{ij}\in
H^0(U_i\cap U_j,T_S(-\log D)),
\]
and
\[
v_i-v_j=d\varphi(\theta_{ij}).
\]
Thus the images of the $v_i$ in $N'$ glue to a global
section. The injectivity is clear since the space of deformations of $(S, \varphi)$ maps injectively to $H^0(S, N_{\varphi})$.

\end{proof}

\subsection{A family of cone surfaces} We now construct the required family of cone surfaces.
Let $G$ be the image of $W$ in the Grassmannian of lines,
and let
$
e:\mathcal I_G\longrightarrow Z
$
be the evaluation morphism from the universal family of
lines over $G$. Since $\dim G=4$, we have
$\dim\mathcal I_G=5$. The morphism $e$ is dominant, so
a general fiber of $e$  has dimension one.

After a finite
extension and possibly shrinking, we get a generically finite dominant map
\[
q:T\longrightarrow Z,
\]
and a smooth projective family of curves
$\mathcal B\to T$ parametrizing lines through $q(t)$.
We may assume that $T$ is smooth and irreducible.

Pulling back the universal line over $G$ gives a smooth
family of ruled surfaces
\[
\mathcal S\longrightarrow T
\]
and an evaluation morphism
\[
\Phi:\mathcal S\longrightarrow Z\subset\PP^5.
\]
For a general $t\in T$, write $S_t$ for the corresponding
surface and $\varphi_t$ for the restriction of $\Phi$. The incidence
construction gives a section $C_t$ of $S_t\to B_t$ contracted
to $q(t)$, and $\varphi_t$ is birational onto the cone swept
out by the corresponding lines.

For a general $t\in T$, $q(t) \not\in X$, so the contact incidence
gives an integral multisection $D_t \subset S_t$ disjoint from $C_t$. Let $r\geq6$ be the
multiplicity of $\varphi_t^*X$ along this multisection for a general $t$.
After a further finite extension and shrinking $T$, we
can assume there is a flat Cartier
divisor
$
\mathcal D\subset\mathcal S
$
such that 
$
\Phi^*X\geq r\mathcal D.
$
We may also assume the multiplicity along
a general point of $D_t$ is exactly $r$.

For a general member
$(S,D,\varphi)$, define $M'$ using the surjection
\[
\rho:\varphi^*T_{\PP^5}\longrightarrow
\mathcal L^{\otimes d}|_{rD},
\]
and let $N'$ be its image in $N_\varphi$, as above. The sheaf 
$T_S(-\log D)$ is locally free of rank $2$ and
\[
\det T_S(-\log D)=\omega_S^{-1}(-D).
\]

\begin{proposition}
\label{prop:normal-nonvanishing}
For a general member of the family constructed above,
\[
H^0 (
S,\bigwedge^4M'\otimes\omega_S(D))\neq0.
\]
\end{proposition}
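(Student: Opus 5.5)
The plan is to imitate the proof of Proposition~\ref{Ysingularities}. We produce four global sections of $M'$ that are generically linearly independent, coming from the deformations of $(S_t,D_t,\varphi_t)$ in the family $\mathcal S\to T$. Their wedge is a nonzero section of $\bigwedge^4 M'$. We then combine it with $T_S(-\log D)$ to land in $\bigwedge^4M'\otimes\omega_S(D)$. Concretely, $\det T_S(-\log D)=\omega_S^{-1}(-D)$, so a section of $\bigwedge^4 M'\otimes\det T'^{-1}$ is what we want.

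First, the deformation input. The family $T$ has dimension $\dim T=\dim Z=4$, since $q:T\to Z$ is generically finite and dominant. Each tangent vector $v\in T_tT$ gives a first-order deformation of $(S,D,\varphi)$ preserving $\varphi_\epsilon^*X\geq rD_\epsilon$. Locally, as in the proof of Proposition~\ref{prop:deformations}, it is given by sections $v_i\in H^0(U_i,M')$ differing by $d\varphi(\theta_{ij})$ with $\theta_{ij}\in T_S(-\log D)$. Passing to the smooth total space $\mathcal S$, I would rather work directly with the sequence
\[
0\to T_S(-\log D)\to T_{\mathcal S}(-\log\mathcal D)|_S\to T_tT\otimes\cO_S\to 0
\]
together with the map $d\Phi:T_{\mathcal S}(-\log \mathcal D)|_S\to M'$. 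The latter exists because $\Phi^*X\geq r\mathcal D$, and Lemma~\ref{lem:Tprime} applies on $\mathcal S$. Taking determinants of the rank-$6$ bundle $E=T_{\mathcal S}(-\log\mathcal D)|_S$ gives
\[
\det E\simeq \det T_S(-\log D)\otimes\cO_S=\omega_S^{-1}(-D).
\]
The map $E\to M'$ is a map from a rank-$6$ bundle to a rank-$5$ bundle, so it is not injective; its kernel contains the direction of the ruling lines through the point, and so on. Instead I use the following: the composite $T_S(-\log D)\to M'$ is injective with image $T'$. The induced map $T_tT\otimes \cO_S\to M'/T'=N'$ is, at a general point $s$, of rank equal to the rank of $d(\Phi)$ modulo $T_S$. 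Since $\Phi:\mathcal S\to Z$ has image of dimension $4$ and $S$ already maps onto a surface, this rank is $2$. That gives only $\bigwedge^4$ of a rank-$4$ subsheaf $T'+\operatorname{im}$, not sections globally. So the cleaner route is to take the saturation $E'\subset M'$ of $d\Phi(E)$, which has rank $4=\dim Z$ and contains $T'$.

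The main step is then the following. The quotient $E\to E'$ is generically surjective, with kernel $K$ of rank $2$ containing the directions in $T_tT$ that move $q(t)$ along the cone. I expect these to be trivial on $S$: they correspond to fixed vectors in $T_tT$ whose image lies in $T_S$, which is the "cone over a point" phenomenon. So $K$ would contain a trivial subsheaf $\cO_S^{\oplus 2}$ up to saturation. Then $\det E'\supseteq \det E\otimes(\det K)^{-1}$, which gives a nonzero map
\[
\omega_S^{-1}(-D)\longrightarrow \det E'\hookrightarrow\bigwedge^4M',
\]
which is the desired nonzero section of $\bigwedge^4M'\otimes\omega_S(D)$. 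The main obstacle is controlling $K$: showing $\det K$ is effective-dual, i.e.\ that $K$ has a generically injective map from a trivial rank-$2$ bundle. I would first try to get this from $\ker(T_tT\to H^0(N'))$, which has dimension at least $2$ because $\Phi$ has $4$-dimensional image with $2$-dimensional fibres through $S$. If this fails, I would fall back on the pointwise argument of Proposition~\ref{Ysingularities}: choose four tangent vectors whose images in $H^0(N')$, together with $T'$, span generically.
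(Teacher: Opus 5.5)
Your main step does not work, for two reasons.

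\textbf{The determinant bookkeeping runs the wrong way.} Let $I$ be the image of $E\to M'$. The sequence $0\to K\to E\to I\to 0$ gives $\det I\simeq\omega_S^{-1}(-D)\otimes(\det K)^{-1}$. So to get a nonzero map $\omega_S^{-1}(-D)\to\det I\to\bigwedge^4M'$ this way, you need a nonzero section of $(\det K)^{-1}$, i.e.\ a nonzero map $\det K\to\cO_S$. The condition you name as the main obstacle, a generically injective $\cO_S^{\oplus2}\to K$, gives the opposite inclusion $\cO_S\hookrightarrow\det K$. Then $\det K\simeq\cO_S(\Delta)$ with $\Delta\geq0$, and you only get a section of $\bigwedge^4M'\otimes\omega_S(D)(\Delta)$, which does not contradict Proposition~\ref{vanishing}.

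\textbf{The trivial directions you hope for in $K$ are not there.} You are conflating two kernels:
\begin{itemize}
\item the pointwise kernel of $T_tT\to N'_s$, which is $2$-dimensional for general $s$ (it is the image $K_s$ of the fibre of $K$ in $T_tT\simeq T_{q(t)}Z$);
\item the kernel $\ker\bigl(T_tT\to H^0(S,N')\bigr)=\bigcap_sK_s$.
\end{itemize}
The fibres of $\Phi$ consist of \emph{different} cones through a common point of $Z$, not of directions in which $S_t$ stays fixed. In fact $K_s$ contains the direction at $q(t)$ of the ruling through $s$, since moving the vertex along that line keeps $\varphi(s)$ on the cone. If $\bigcap_sK_s$ were $2$-dimensional, every $K_s$ would be the same plane containing every ruling direction, forcing $B_0$ to be a line. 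Since $K\subset T_tT\otimes\cO_S$, any map $\cO_S^{\oplus2}\to K$ is given by constant vectors lying in $\bigcap_sK_s$. So, unless $B_0$ is a line, $K$ admits no generically injective map from $\cO_S^{\oplus 2}$.

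\textbf{The fix is the argument you discarded with ``not sections globally''.} This is the paper's proof.
\begin{itemize}
\item $T_tT\otimes\cO_S$ is trivial, and $T_tT\otimes\cO_U\to N'|_U$ is a morphism of sheaves on $U=S\setminus\operatorname{Sing}(D)$.
\item So two vectors $v_1,v_2\in T_tT$ whose images in $N'_s$ are independent at a general $s$ (they exist by your rank-$2$ computation) \emph{are} global sections $\sigma_1,\sigma_2\in H^0(U,N'|_U)$.
\item The sequence $0\to T'\to M'\to N'\to0$ induces a natural map $\det T'\otimes\bigwedge^2N'\to\bigwedge^4M'$. It is well defined because lifts to $M'$ are ambiguous only up to $T'$, and it is generically injective.
\item It sends $\sigma_1\wedge\sigma_2$ to a nonzero section of $\bigwedge^4M'\otimes(\det T')^{-1}=\bigwedge^4M'\otimes\omega_S(D)$ over $U$. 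This extends across the finite set $\operatorname{Sing}(D)$ because the bundle is locally free.
\end{itemize}

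Your own setup also works once the direction is corrected. Because $d\varphi$ is injective, $K\cap T_S(-\log D)=0$, so $K$ injects into $T_tT\otimes\cO_S$ on $U$. Hence $\det K\to\bigwedge^2(T_tT)\otimes\cO_S\simeq\cO_S^{\oplus6}$ is nonzero, and $(\det K)^{-1}$ has a nonzero section automatically, with no triviality of $K$ needed.

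Finally, deformations give sections of $N'$ only, not of $M'$; as you note yourself, the local lifts differ by $d\varphi(\theta_{ij})$. So the opening plan with four sections of $M'$, and the fallback with ``four tangent vectors together with $T'$'', are not correct as stated. Two sections of $N'$, wedged against $\det T'$, are what is needed.
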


\begin{proof}
The evaluation morphism $\Phi:\mathcal S\to Z$ is dominant by our assumption. 
Since we work in characteristic zero,
the differential of $\Phi$ has rank $4$ at a general point
of $\mathcal S$. Fix a general point $(t,p)$ in $\mathcal S$, and write $(S,D,\varphi)$ for the corresponding member. The image of
\[
T_{\mathcal S}|_{(t,p)}
\longrightarrow
\varphi^*T_Z|_p
\subset
\varphi^*T_{\PP^5}|_p
\]
is therefore $4$-dimensional.
Since the kernel of
\[
\varphi^*T_{\PP^5}|_p\longrightarrow N_\varphi|_p
\]
is the two-dimensional subspace $T_S|_p$,
the composite
\[
T_{\mathcal S}|_{(t,p)}
\longrightarrow
\varphi^*T_Z|_p
\longrightarrow
N_\varphi|_p
\]
has image of dimension at least $2$.

On the other hand, if we set 
$
U=S\setminus\operatorname{Sing}(D),
$
then the construction
in Proposition~\ref{prop:deformations} gives
a linear map
$$
T_tT\longrightarrow H^0(U,N'|_U).
$$
So we get a commutative diagram 
$$\xymatrix{
T_{\mathcal S}|_{(t,p)} \ar[rr] \ar[d] & &N_{\varphi}|_p \\
T_tT \ar[r] &  H^0(U,N'|_U) \ar[r]   & N'|_p \ar[u]
}
$$
Since the image of the top row is at least $2$-dimensional, the image of the map $H^0(U,N'|_U) \to  N'|_p$ is  at least $2$-dimensional. 
Hence there are sections 
\[
\sigma_1,\sigma_2\in H^0(U,N'|_U)
\]
whose values are linearly independent at a general point. Their exterior product, $\sigma_1 \wedge \sigma_2$, is therefore a generically nonzero
section of $\bigwedge^2(N'|_U)$.
The short exact sequence 
$$ 0 \longrightarrow T_S(-\log D)|_U \longrightarrow M'|_U \longrightarrow N'|_U \to 0$$
induces a map 
\[
\det(T_S(-\log D)|_U)
\otimes\bigwedge^2(N'|_U)
\longrightarrow
\bigwedge^4(M'|_U).
\]
This map is generically injective, and $M'$ is locally free, therefore we get an injective map
\[
\frac{\bigwedge^2(N'|_U)}{\operatorname{tors}}
\hookrightarrow
\bigwedge^4(M'|_U)\otimes
\det (T_S(-\log D)|_U)^\vee
\simeq
(\bigwedge^4M'\otimes\omega_S(D))|_U.
\]
Since $\sigma_1\wedge\sigma_2$ is generically nonzero,
its image gives a nonzero section of
\[
(\bigwedge^4M'\otimes\omega_S(D))|_U.
\]
This bundle is locally free, and $S\setminus U$ is finite.
The section therefore extends to all of $S$,
proving the assertion.

\end{proof}

To get a contradiction and complete the proof of Theorem \ref{main}, it remains to prove the vanishing
\[
H^0(
S,\bigwedge^4M'\otimes\omega_S(D)
)=0
\]
for the family constructed above when $r\geq 6$, which we do in the following section.
\subsection{The vanishing calculation}

Let $F$ be a fiber of $\pi: S \to B$. Set $m=D \cdot F$ and $e=\deg B_0$. Since $\varphi^*X\geq rD$, we have 
\begin{equation}\label{eq:contact-degree}
rm\leq d.
\end{equation}
Since $\varphi$ contracts 
$C$, we have $\mathcal L|_C \simeq \cO_C$. The cone construction gives $N_{C/S} \simeq \mathcal N$ and $C^2 = \deg \mathcal N = -e$. 

Because $\mathcal L^{\otimes m}$ and $\cO_S(D)$ have the same degree on every fiber of $\pi$, $$\cO_S(D) \otimes \mathcal L^{\otimes (-m)} \simeq  \pi^*E$$ for some line bundle $E$ on $B$. Restricting to $C$, since $C \cap D= \emptyset$ and $\mathcal L|_C \simeq\cO_C$, we see that $\pi^*E|_C \simeq \cO_C$, so 
$$\cO_S(D) \simeq \mathcal L^{\otimes m}.$$
Similarly, $\mathcal L \otimes \cO_S(-C)$ has degree zero on every fiber, so it is the pullback of a line bundle on $B$ 
Restricting to $C$ and using the fact that $\mathcal L|_C \simeq \cO_C$ gives
$$\mathcal L \simeq \pi^* \mathcal N^{-1}  \otimes \cO_S(C).$$
We now prove a handful of useful lemmas, before proving the main vanishing theorem in Proposition~\ref{vanishing}
\begin{lemma}\label{push-forward}
We have
\[
\pi_*\cO_{rD}\simeq
\bigoplus_{i=0}^{rm-1}\mathcal N^{\otimes i}.
\]
\end{lemma}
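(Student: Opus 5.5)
Since $rD$ is an effective Cartier divisor with $\cO_S(rD)\simeq\mathcal L^{\otimes rm}$, the plan is to compute $\pi_*\cO_{rD}$ from the sequence
\[
0\longrightarrow \cO_S(-rD)\longrightarrow \cO_S\longrightarrow \cO_{rD}\longrightarrow 0 .
\]
Push it forward along $\pi$. The divisor $rD$ meets every fiber $F\simeq\PP^1$ in a subscheme of degree $rm$, so $\cO_S(-rD)|_F\simeq\cO_F(-rm)$ and $\pi_*\cO_S(-rD)=0$. Also $\pi_*\cO_S=\cO_B$ and $R^1\pi_*\cO_S=0$. This gives
\[
0\longrightarrow \cO_B\longrightarrow \pi_*\cO_{rD}\longrightarrow R^1\pi_*\cO_S(-rD)\longrightarrow 0 ,
\]
and the exact sequence realizes $\pi_*\cO_{rD}$ as a vector bundle of rank $rm$.

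Next I compute $R^1\pi_*\cO_S(-rD)$ by relative duality. I use $\cO_S(D)\simeq\mathcal L^{\otimes m}$ and $\mathcal L\simeq\pi^*\mathcal N^{-1}\otimes\cO_S(C)$, and write $S=\PP_B(\cO_B\oplus\mathcal N)$ with $\cO_S(C)$ the relative tautological bundle (normalized so that $\pi_*\cO_S(C)\simeq\cO_B\oplus\mathcal N^{-1}$; this matches $C^2=-e=\deg\mathcal N$). Set $a=rm$. Then
\[
\pi_*\cO_S(aC)\simeq \operatorname{Sym}^a(\cO_B\oplus\mathcal N^{-1})=\bigoplus_{j=0}^a\mathcal N^{-j},
\]
so
\[
\pi_*\mathcal L^{\otimes a}=\mathcal N^{-a}\otimes\bigoplus_{j=0}^a\mathcal N^{-j}\cdot(\text{appropriately signed}).
\]
More cleanly, by relative duality,
\[
R^1\pi_*\mathcal L^{-a}\simeq\bigl(\pi_*(\mathcal L^{a}\otimes\omega_{S/B})\bigr)^\vee,
\qquad
\omega_{S/B}\simeq\cO_S(-2C)\otimes\pi^*(\text{det-twist}).
\]
I then track the twists carefully to land on $\bigoplus_{i=1}^{a-1}\mathcal N^{\otimes i}$. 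As a sanity check, the summands must be $\mathcal N^{i}$ with $i\ge1$, because of the sign of $\deg\mathcal N=-e<0$ and because the sections of $\cO_{rD}$ along fibers are polynomial functions on $F\setminus C\simeq\mathbb A^1$ of degree $<a$, weighted by the fiber coordinate, which transforms like $\mathcal N$.

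Indeed, a more transparent route is the following. The complement $S\setminus C$ is the total space of the line bundle $\mathcal N^{-1}$ (or $\mathcal N$, depending on convention) over $B$, and $rD\subset S\setminus C$. Hence $\pi_*\cO_{rD}$ is a quotient of $\pi_*\cO_{S\setminus C}=\bigoplus_{i\ge0}\mathcal N^{\otimes i}$, the symmetric algebra of the dual of the fiber direction. The composite
\[
\bigoplus_{i=0}^{a-1}\mathcal N^{\otimes i}\longrightarrow\pi_*\cO_{rD}
\]
is an isomorphism on each fiber. This is because $rD\cap F$ is a length-$a$ subscheme of $\mathbb A^1$, whose functions are restrictions of polynomials of degree $<a$ with no kernel (a nonzero polynomial of degree $<a$ cannot vanish on a length-$a$ scheme). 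A map of vector bundles of equal rank that is an isomorphism on fibers is an isomorphism, and this gives the splitting with no extension issues.

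The main obstacle is getting the sign and convention right, namely that the fiber coordinate on $S\setminus C$ contributes $\mathcal N$ rather than $\mathcal N^{-1}$. I would check this using $N_{C/S}\simeq\mathcal N$ and the fact that the section at infinity has normal bundle $\mathcal N^{-1}$. The affine coordinate centered at $C$ has inverse transforming like $\mathcal N^{-1}$, so functions on the complement of $C$ graded by pole order along $C$ transform by $\mathcal N^{i}$. I would also need flatness of $rD$ over $B$, which holds because $rD$ is a Cartier divisor that is finite over a smooth curve.
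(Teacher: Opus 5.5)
Your second (``more transparent'') route is correct and is essentially the paper's proof. The degree-$<rm$ part of $\pi_*\cO_{S\setminus C}$ is $\pi_*\cO_S((rm-1)C)\simeq\operatorname{Sym}^{rm-1}(\cO_B\oplus\mathcal N)$. Your composite is exactly the restriction map that the paper shows is an isomorphism by pushing forward $0\to\cO_S((rm-1)C-rD)\to\cO_S((rm-1)C)\to\cO_{rD}\to 0$, and your fiberwise interpolation argument is the statement $h^0=h^1=0$ for $\cO_{\PP^1}(-1)$ on each ruling.

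You should drop the first (relative duality) route, though. There $\pi_*\cO_S(C)$ is an extension of $\cO_C(C)\simeq\mathcal N$ by $\cO_B$, so it is $\cO_B\oplus\mathcal N$, not $\cO_B\oplus\mathcal N^{-1}$. Also, the ``track the twists'' step in that route is never actually carried out.
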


\begin{proof}
Put $N=rm$. Twisting the ideal-sheaf sequence of $rD$ by
$\cO_S((N-1)C)$ gives
\[
0\longrightarrow
\cO_S((N-1)C-rD)
\longrightarrow
\cO_S((N-1)C)
\longrightarrow
\cO_{rD}((N-1)C)
\longrightarrow0.
\]
The left-hand term restricts to
$\cO_{\PP^1}((N-1)-rm)=\cO_{\PP^1}(-1)$ on every ruling
fiber. Therefore \[
\pi_*\cO_S((N-1)C-rD)
=
R^1\pi_*\cO_S((N-1)C-rD)=0.
\]
Since $C\cap D=\emptyset$, we have
\[
\cO_{rD}((N-1)C)\simeq\cO_{rD}.
\]
So pushing forward the above short exact sequence to $B$ we get 
\[
\pi_*\cO_{rD}\simeq\pi_*\cO_S((N-1) C) \simeq
\operatorname{Sym}^{N-1}(\cO_B\oplus\mathcal N)
\simeq
\bigoplus_{i=0}^{N-1} \mathcal N^i.
\]
\end{proof}

Recall that $B_0$ is the base curve of the cone surface $S_0$, and $\nu:B\to B_0$ is its normalization.

\begin{lemma}\label{genus-deg}
If $B$ and $B_0$ are as above, then 
$$ \deg B_0 \geq 2g_{B}-2$$ 
and the equality holds if and only if $B$ is a smooth complete intersection of type $(1,1,2,3)$ in $\PP^5$.
\end{lemma}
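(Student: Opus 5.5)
The plan is to identify $B_0$ with the curve of lines through the vertex $q$, and then bound its geometric genus by combining Castelnuovo's bound with the degree bound of Proposition~\ref{reduction}.

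\textbf{Setup.} The lines of the cone $S_0$ all pass through $q$ and are tangent to $Z$ there. So $S_0$ lies in the embedded tangent space $\mathbb T_qZ\simeq\PP^4$, and a general hyperplane section $B_0$ lies in a $\PP^3$. Projecting from $q$ identifies $B_0$ isomorphically with the component $\Sigma$ of $F^q$ in $\PP(T_qZ)\simeq\PP^3$ from Proposition~\ref{reduction}. Hence $e=\deg B_0=\deg\Sigma\leq6$, and $e=6$ only when $\Sigma$ is a complete intersection of type $(2,3)$. Moreover $g_B$ is the geometric genus of $B_0$.

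\textbf{Planar case.} Suppose $B_0$ spans only a plane (or a line). Proposition~\ref{degree-lines-through-point}, applied to $Z$ with $m=3$, shows that $\Sigma$ is a component of $V(f_1,f_2,f_3)$, so it lies in the quadric $V(f_1,f_2)\subset\PP^3$. A plane curve on a quadric either has degree $\leq2$, or its plane lies in the quadric. In the latter case the curve is cut out in that plane by the cubic $f_3$, so it has degree $\leq3$. In every planar case $e\leq3$ and $g_B\leq1$, hence $2g_B-2\leq0<e$.

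\textbf{Nondegenerate case.} Now let $B_0\subset\PP^3$ be nondegenerate of degree $e\in\{3,4,5,6\}$. Castelnuovo's bound, which bounds the arithmetic genus and hence the geometric genus, gives $g_B\leq 0,1,2,4$ respectively. Thus $2g_B-2<e$ for $e\leq5$. For $e=6$ we get $2g_B-2\leq6=e$, and equality forces $g_B=4$. This is the arithmetic genus of a $(2,3)$ complete intersection, so $B_0$ has no singularities, $B=B_0$, and $B$ is a smooth $(2,3)$ complete intersection in $\PP^3$. Inside $\PP^5$ the $\PP^3$ is cut out by two hyperplanes ($\mathbb T_qZ$ and the chosen general hyperplane), so $B$ is a smooth complete intersection of type $(1,1,2,3)$.

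\textbf{Converse and main obstacle.} If $B$ is a smooth complete intersection of type $(1,1,2,3)$, then adjunction gives $\deg B=6$ and $g_B=4$, so equality holds. The step needing most care is the planar case. A plane sextic would have geometric genus up to $10$ and break the inequality, so it must be excluded. This is exactly what the quadric and cubic constraints coming from Proposition~\ref{degree-lines-through-point} achieve.
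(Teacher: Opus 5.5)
Your proof is correct and follows the same route as the paper: the bound $\deg B_0\leq 6$ from Proposition~\ref{reduction}, Castelnuovo's bound in the nondegenerate case, a direct check in low degree, and smoothness in the equality case from $p_a(B_0)=g_B=4$. Your quadric-and-cubic argument for the planar case justifies the paper's unproved assertion that $\deg B_0\geq 4$ forces nondegeneracy, and your adjunction check gives the converse that the paper leaves implicit; the one detail you leave unstated is that $f_3$ cannot vanish identically on a plane contained in $V(f_1,f_2)$, since otherwise $\Sigma$ would not be an irreducible component of $V(f_1,f_2,f_3)$.
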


\begin{proof}
We have shown before that $B_0$ is an irreducible component  of a complete intersection of a quadric and a cubic in $\PP^3$. 
So $$ \deg B_0 \leq 6.$$ If $\deg B_0 \geq 4$, then $B_0$ is non-degenerate, so by Castelnuovo's bound 
$$ g_B \leq \frac{(\deg B_0- 2)^2}{4}$$ which gives the desired inequality. If $B_0$ has degree $ \leq 3$, then $g_B \leq 1$, so the inequality holds. 

Equality is possible only when $e=6$ and $g_{B}=4$, and in this case, $B_0$ is the entire complete intersection and is non-singular. 
\end{proof}

\begin{proposition}\label{vanishing}
If $d \geq 7$ and $r \geq 6$, then for the family constructed above, $$H^0(
S,\bigwedge^4M'\otimes\omega_S(D)
)=0.$$
\end{proposition}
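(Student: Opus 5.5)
The plan is to rewrite $\bigwedge^4M'\otimes\omega_S(D)$ in terms of the dual of $M'$, and then to bound the global sections of each piece of a natural extension. Since $M'$ has rank $5$, we have $\bigwedge^4M'\simeq M'^{\vee}\otimes\det M'$. The defining sequence of $M'$ gives
\[
\det M'\simeq\det\varphi^*T_{\PP^5}\otimes\cO_S(-rD)\simeq\mathcal L^{\otimes 6}(-rD)\simeq\mathcal L^{\otimes(6-rm)},
\]
using $\cO_S(D)\simeq\mathcal L^{\otimes m}$. Hence
\[
\bigwedge^4M'\otimes\omega_S(D)\simeq M'^{\vee}\otimes A,\qquad A:=\mathcal L^{\otimes(6+m-rm)}\otimes\omega_S.
\]
Dualizing $0\to M'\to\varphi^*T_{\PP^5}\to\mathcal L^{\otimes d}|_{rD}\to0$ gives
\[
0\to\varphi^*\Omega_{\PP^5}\to M'^{\vee}\to\mathcal{E}xt^1(\mathcal L^{\otimes d}|_{rD},\cO_S)\simeq\mathcal L^{\otimes(-d)}(rD)|_{rD}\to0.
\]
It therefore suffices to show that both $H^0(S,\varphi^*\Omega_{\PP^5}\otimes A)$ and $H^0(rD,\mathcal L^{\otimes(rm-d)}\otimes A|_{rD})$ vanish.

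For the first term, the Euler sequence embeds $\varphi^*\Omega_{\PP^5}\otimes\mathcal L$ into $\cO_S^{\oplus6}$. So $H^0(\varphi^*\Omega_{\PP^5}\otimes A)$ injects into $H^0(\mathcal L^{\otimes j}\otimes\omega_S)^{\oplus6}$ with $j=5+m-rm\le 5-5m\le0$, since $r\ge6$ and $m\ge1$. On a ruling fiber $F$, the bundle $\mathcal L^{\otimes j}\otimes\omega_S$ has degree $j-2<0$, so it has no sections. This step is routine.

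For the torsion term, I will use that $C\cap D=\emptyset$. Near $D$ we have $\mathcal L\simeq\pi^*\mathcal N^{-1}$, and I will write $\omega_S\simeq\cO_S(-2C)\otimes\pi^*(\omega_B\otimes\mathcal N^{\epsilon})$ for the appropriate twist $\epsilon$ coming from $\det(\cO_B\oplus\mathcal N)$; I will fix the sign by adjunction on $C$, using $C^2=-e$. Then the restriction of $\mathcal L^{\otimes(rm-d)}\otimes A$ to $rD$ is $\pi^*$ of a line bundle $P$ on $B$ with
\[
\deg P=e\,(6+m-d)+(2g_B-2)+O(e).
\]
By Lemma~\ref{push-forward}, the space $H^0(rD,\pi^*P|_{rD})$ is $\bigoplus_{i=0}^{rm-1}H^0(B,P\otimes\mathcal N^{\otimes i})$. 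Since $\deg\mathcal N=-e<0$, the $i=0$ summand is the worst, and it suffices to show $\deg P<0$. Here I use $rm\le d$, so $m\le d/6$, together with $d\ge7$, which gives $6+m-d\le 6-\tfrac{5d}{6}<0$. I then combine this with Lemma~\ref{genus-deg}, $2g_B-2\le e$.

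The main obstacle is this degree count: the exact twist in $\omega_S$ must be tracked, and the margin is thin when $d=7$ and $m=1$, where $6+m-d=0$. In that boundary case the vanishing rests entirely on the $\omega_B$ and $\mathcal N$ contributions being negative. If the crude count fails there, I would refine it using the equality case of Lemma~\ref{genus-deg} (where $B$ is a smooth $(2,3)$ complete intersection, so $\omega_B\simeq\mathcal N^{-1}$) to get a strict inequality. If even that fails, I would show directly that the connecting map $H^0(\text{torsion}\otimes A)\to H^1(\varphi^*\Omega_{\PP^5}\otimes A)$ is injective.
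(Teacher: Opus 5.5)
Your reduction is the Serre dual of the paper's: the paper proves $H^2\bigl(S,M'\otimes\mathcal L^{\otimes(-6)}((r-1)D)\bigr)=0$ from the same defining sequence, and your first step is dual to its vanishing (1). The correct twist is $\epsilon=1$, i.e.\ $\omega_S\simeq\cO_S(-2C)\otimes\pi^*(\omega_B\otimes\mathcal N)$. With it, $P\simeq\omega_B\otimes\mathcal N^{\otimes(d-m-5)}$ and $\deg P=2g_B-2-(d-m-5)e$. So your torsion term vanishes exactly when $(d-m-5)e>2g_B-2$, as in the paper. (Also, $6-\tfrac{5d}{6}<0$ is false for $d=7$; integrality of $m$ only gives $6+m-d\le 0$.)

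\textbf{The gap.} The remaining case is $d=7$, $m=1$, $e=6$, $g_B=4$, where $B$ is a smooth $(2,3)$ complete intersection and $\omega_B\simeq\mathcal N^{-1}$. There $P\simeq\cO_B$, so
\[
H^0\bigl(rD,\mathcal L^{\otimes(rm-d)}\otimes A|_{rD}\bigr)\simeq\bigoplus_{i=0}^{r-1}H^0(B,\mathcal N^{\otimes i})=H^0(B,\cO_B)\neq 0 .
\]
Your proposed refinement to a strict inequality therefore cannot succeed, because the torsion term genuinely contributes. In this case the proposition is \emph{equivalent} to injectivity of the connecting map $H^0(\text{torsion}\otimes A)\to H^1(\varphi^*\Omega_{\PP^5}\otimes A)$, and you give no argument for that. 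Your degree count also never uses that the contact order along $D$ is \emph{exactly} $r$, i.e.\ $\alpha|_D\neq 0$ where $\varphi^*f=s_D^r\alpha$. That fact is indispensable here.

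\textbf{How the paper fills it.} The paper proves the dual statement: surjectivity of $H^1(S,\varphi^*T_{\PP^5}\otimes\mathcal L^{\otimes(r-7)})\to H^1(S,\mathcal L^{\otimes r}|_{rD})$, by comparing with the logarithmic tangent sequence. Since $D$ is now a smooth section, $\rho\circ d\varphi$ factors as
\[
T_S\to N_{D/S}\xrightarrow{r\alpha|_D}\mathcal L^{\otimes d}(-(r-1)D)|_D\hookrightarrow\mathcal L^{\otimes d}|_{rD}.
\]
The argument then runs in three steps.
\begin{enumerate}
\item From $T_S(-\log D)|_F\simeq\cO_F\oplus\cO_F(1)$ one gets $H^2(T_S(-\log D)\otimes\mathcal L^{\otimes(r-7)})=0$. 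Hence $H^1(T_S\otimes\mathcal L^{\otimes(r-7)})\to H^1(B,K_B^{\otimes(r-6)})$ is onto.
\item Multiplication by $\alpha|_D\neq 0$ gives a surjection $H^1(K_B^{\otimes(r-6)})\to H^1(K_B)$. This is dual to the injective map $H^0(\cO_B)\to H^0(K_B^{\otimes(7-r)})$.
\item The map $H^1(K_B)\to H^1(\mathcal L^{\otimes r}|_{rD})$ is onto, because $H^1(\mathcal L^{\otimes r}|_{(r-1)D})\simeq\bigoplus_{j=2}^{r}H^1(K_B^{\otimes j})=0$.
\end{enumerate}
Commutativity of the diagram then gives the required surjectivity. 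Your fallback needs an argument of this kind, using $T_S\subset\varphi^*T_{\PP^5}$ and $\alpha|_D\neq 0$.
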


\begin{proof}
The defining sequence for $M'$ gives
\[
\det M'\simeq \mathcal L^{\otimes 6}(-rD).
\]
Thus Serre duality yields
\begin{align*}
H^0\bigl(S,\wedge^4M'(K_S+D)\bigr)
&\simeq H^0\bigl(S,(M')^\vee\otimes
\mathcal L^{\otimes 6}(K_S-(r-1)D)\bigr)\\
&\simeq H^2\bigl(S,M'\otimes
\mathcal L^{\otimes (-6)}((r-1)D)\bigr)^\vee.
\end{align*}
So it is enough to prove that the last $H^2$ vanishes.

Tensoring the defining sequence for $M'$ by
$\mathcal L^{\otimes(-6)}((r-1)D)$ gives the following short exact sequence 
$$0 \to  M' \otimes \mathcal L^{\otimes(-6)} ((r-1)D) \to \varphi^*T_{\PP^5} \otimes  \mathcal L^{\otimes(-6)} ((r-1)D)  \to \mathcal L^{\otimes(d-6)} ((r-1)D)|_{rD} \to 0.$$
Applying the long exact sequence of cohomology, it is enough to show
\begin{itemize}
\item[(1)] $H^2(S, \varphi^*T_{\PP^5} \otimes  \mathcal L^{\otimes(-6)} ((r-1)D))=0$
\item[(2)]  The map $$H^1(S, \varphi^*T_{\PP^5} \otimes  \mathcal L^{\otimes(-6)} ((r-1)D))  \to H^1(S, \mathcal L^{\otimes(d-6)} ((r-1)D)|_{rD})$$ is surjective.
\end{itemize}
Since $r \geq 6$ and $m\geq 1$, we have $(r-1)m-5 \geq 0$, so the Euler sequence shows the first vanishing holds. So it remains to prove (2). Set $t=d+(r-1)m-6$. We have $$\mathcal L^{\otimes (d-6)}((r-1)D)|_{rD} \simeq \mathcal L^{\otimes t}|_{rD} \simeq \pi^* \mathcal N^{\otimes (-t)}|_{rD},$$ so 
\begin{align*}
    H^1(S, \mathcal L^{\otimes(d-6)} ((r-1)D)|_{rD}) & = H^1(B, \pi_* (\pi^*\mathcal N^{\otimes (-t)} \otimes \cO_{rD})) \\ & = H^1(B, \mathcal N^{\otimes(-t)} \otimes \pi_* \cO_{rD}).
\end{align*}
By Lemma \ref{push-forward}, we have $$\pi_* \cO_{rD} = \bigoplus_{i=0}^{rm-1} \mathcal N^{\otimes i}$$
so $$H^1(S, \mathcal L^{\otimes (d-6)} ((r-1)D)|_{rD}) = H^1(B, \bigoplus_{i=0}^{rm-1} \mathcal N^{\otimes(i-t)}).$$
The largest exponent is $m+5-d$, and $\deg \mathcal N <0$.
Since $rm\leq d$, $r\geq6$, and $d\geq7$, we have
\[
d-m-5\geq1.
\]
If
\[
(d-m-5)e>2g_B-2,
\]
then every summand has vanishing $H^1$,
and the required surjectivity is automatic.

It remains to consider the case $$(d-m-5) e  \leq 2g_B-2.$$
Since $d-m-5 \geq 1$, by Lemma \ref{genus-deg}, if $(d-m-5) e  \leq 2g_B-2$, 
then $B_0$ is a smooth complete intersection of type $(1,1,2,3)$ in $\PP^5$, and $d-m-5=1$. Since $d \geq 7$ by our assumption and $mr \leq d$, this implies $d=7$, $m=1$, and $r \in \{6, 7\}$. We will prove (2) in these two cases. 

Since $m=1$, $\pi|_D:D\to B$ is an isomorphism, so $D$ is a smooth section of $\pi$. 
Since $B$ is a smooth complete intersection of a cubic and a quadric in $\PP^3$, $K_B \simeq \mathcal N^{-1}$, and $\mathcal L \simeq \cO_S(D)$. The required surjectivity is therefore
\[
H^1\bigl(S,\varphi^*T_{\PP^5}\otimes
\mathcal L^{\otimes(r-7)}\bigr)
\longrightarrow
H^1(S,\mathcal L^{\otimes r}|_{rD}).
\]

Since $D$ is smooth, the diagram
\eqref{deformations} is valid on all of $S$.
Tensoring its first two rows by
$\mathcal L^{\otimes(r-7)}$ gives the commutative
diagram with exact rows
$$\xymatrix{ 
0 \ar[r] & T_S(-\log D) \otimes \mathcal L^{\otimes (r-7)} \ar[r] \ar[d] & T_S \otimes \mathcal L^{\otimes (r-7)}  \ar[r] \ar[d] & N_{D/S} \otimes  \mathcal L^{\otimes (r-7)}  \ar[r] \ar[d] & 0  \\
0 \ar[r] & M' \otimes \mathcal L^{\otimes (r-7)}  \ar[r] & \varphi^*T_{\PP^5}\otimes \mathcal L^{\otimes (r-7)}  \ar[r] & \mathcal L^{\otimes r}|_{rD} \ar[r] & 0 
}
$$

We first prove that
\[
H^2(S,T_S(-\log D)\otimes
\mathcal L^{\otimes(r-7)})=0.
\]
Let $F$ be a fiber of $\pi$ and $p=D \cap F$. Since $T_S|_F \simeq  \cO_F \oplus \cO_F(2)$, and since there is a short exact sequence 
\[
0 \longrightarrow T_S(-\log D)|_F \longrightarrow T_S|_F \longrightarrow k(p) \longrightarrow 0
\]
we get 
$$ T_S(-\log D)|_F \simeq \cO_F \oplus \cO_F(1).
$$
Since $r=6$ or $7$, every component of $T_S(-\log D) \otimes \mathcal L^{\otimes (r-7)}|_F$ has degree $>-2$, so by Serre duality the desired $H^2$ vanishing holds. 

The first row of the diagram therefore induces
a surjection
\[
H^1(S,T_S\otimes\mathcal L^{\otimes(r-7)})
\longrightarrow
H^1(D,N_{D/S}\otimes
\mathcal L^{\otimes(r-7)}).
\]
By commutativity, it remains to show that the right
vertical map also induces a surjection on $H^1$.

Write
\[
\varphi^*f=s_D^r\alpha,
\qquad
\alpha\in H^0(S,\mathcal L^{\otimes(7-r)}),
\qquad
\alpha|_D\neq0.
\]
From the proof of Lemma \ref{lem:Tprime}, it follows that the right vertical map factors through 
\[
N_{D/S} \otimes \mathcal L^{\otimes (r-7)} 
\xrightarrow{\;r\alpha|_D\;}
\mathcal L^{\otimes r}(-(r-1)D)|_{D}
\hookrightarrow \mathcal L^{\otimes r}|_{rD}.
\]
Identifying $D$ with $B$ and using
$\mathcal L|_D\simeq K_B$, this factorization becomes
\[
K_B^{\otimes(r-6)}
\xrightarrow{\;r\alpha|_D\;}
K_B
\hookrightarrow
\mathcal L^{\otimes r}|_{rD}.
\]
The first map induces a surjection on $H^1$ since by Serre duality, 
its dual is the map
\[
H^0(B,\mathcal O_B)
\xrightarrow{\;r\alpha|_D\;}
H^0(B,K_B^{\otimes(7-r)}),
\]
which is injective because $\alpha|_D\neq0$.

For the second map, note that $\mathcal{L}^{\otimes r}(-(r-1)D)|_{D}\simeq \mathcal{L}^{\otimes r}\otimes\frac{I_D^{r-1}}{I_D^r}$, the right hand side being the kernel of the natural map $\mathcal{L}^{\otimes r}|_{rD}\to \mathcal{L}^{\otimes r}|_{(r-1)D}$. Using the identification $\mathcal{L}|_D\simeq K_B$, we have the exact sequence
\[
0\longrightarrow K_B
\longrightarrow \mathcal L^{\otimes r}|_{rD}
\longrightarrow \mathcal L^{\otimes r}|_{(r-1)D}
\longrightarrow0.
\]
The pushforward calculation above gives
\[
H^1\!\left(S,\mathcal L^{\otimes r}|_{(r-1)D}\right)
\simeq
\bigoplus_{j=2}^{r}H^1(B,K_B^{\otimes j})
=0,
\]
since $g_B=4$. Thus the second arrow also induces a
surjection on $H^1$. This proves the
required surjectivity and completes the proof.
\end{proof}

\bibliographystyle{siam}
\bibliography{mybib}

\end{document}